\DeclareMathAlphabet{\pazocal}{OMS}{zplm}{m}{n} 
\numberwithin{equation}{section}
\newtheorem{proposition}{Proposition}[section]
\newtheorem{lemma}{Lemma}[section]
\newtheorem{theorem}{Theorem}
\newtheorem{corollary}{Corollary}[section]
\newtheorem{definition}{Definition}[section]
\newtheorem{remark}{Remark}[section]
\newcommand{\C}{\mathbb{C}} 
\newcommand{\R}{\mathbb{R}} 
\newcommand{\Z}{\mathbb{Z}} 
\newcommand{\N}{\mathbb{N}} 
\newcommand{\B}{\mathbb{B}} 
\newcommand{\Sbb}{\mathbb{S}} 
\newcommand{\leb}{\mathsf{L}} 
\newcommand{\sob}{\mathsf{H}} 
\newcommand{\loc}{\mathrm{loc}} 
\newcommand{\comp}{\mathrm{comp}} 
\newcommand{\Ran}{\mathrm{Ran}} 
\newcommand{\Tr}{\mathrm{Tr}} 
\newcommand{\Sp}{\mathrm{Sp}} 
\newcommand{\disc}{\mathrm{disc}} 
\newcommand{\ess}{\mathrm{ess}} 
\newcommand{\Res}{\mathrm{Res}} 
\newcommand{\lp}{\left(} 
\newcommand{\rp}{\right)} 
\newcommand{\lb}{\left[} 
\newcommand{\rb}{\right]} 
\newcommand{\lcb}{\left\lbrace} 
\newcommand{\rcb}{\right\rbrace} 
\newcommand{\la}{\left\langle} 
\newcommand{\ra}{\right\rangle} 
\newcommand{\lv}{\left\vert} 
\newcommand{\rv}{\right\vert} 
\newcommand{\lV}{\left\Vert} 
\newcommand{\rV}{\right\Vert} 
\newcommand{\indic}{\mathbf{1}} 
\newcommand{\Dvec}{\mathbf{D}}
\newcommand{\nvec}{\mathbf{n}}
\newcommand{\Nvec}{\mathbf{N}}
\newcommand{\tvec}{\mathbf{t}}
\newcommand{\svec}{\mathbf{s}}
\newcommand{\vvec}{\mathbf{v}}
\newcommand{\xvec}{\mathbf{x}}
\newcommand{\yvec}{\mathbf{y}}
\newcommand{\nulvec}{\mathbf{0}}
\newcommand{\Acal}{\mathcal{A}}
\newcommand{\Ccal}{\mathcal{C}}
\newcommand{\Dcal}{\mathcal{D}}
\newcommand{\Fcal}{\mathcal{F}}
\newcommand{\Lcal}{\mathcal{L}}
\newcommand{\Scal}{\mathcal{S}}
\newcommand{\Apaz}{\pazocal{A}}
\newcommand{\Mpaz}{\pazocal{M}}
\newcommand{\Rpaz}{\pazocal{R}}
\newcommand{\Tpaz}{\pazocal{T}}
\newcommand{\Upaz}{\pazocal{U}}
\newcommand{\Vpaz}{\pazocal{V}} 
\newcommand{\Wpaz}{\pazocal{W}}
\newcommand{\Zpaz}{\pazocal{Z}}
\newcommand{\Hfrak}{\mathfrak{H}}
\newcommand{\Dfrak}{\mathfrak{D}}
\newcommand{\mfrak}{\mathfrak{m}}
\newcommand{\pfrak}{\mathfrak{p}}
\DeclareMathOperator{\Supp}{\mathrm{Supp}} 
\DeclareMathOperator{\tq}{\mathrm{ | }} 
\newcommand{\Ind}{\mathrm{Ind}} 
\title{Absence of real resonances of Dirac operators}
\date{\today}
\author{Henry DUMANT}
\address{Univ. Bordeaux, CNRS, Bordeaux INP, IMB, UMR 5251, F-33400 Talence, France }
\email{henry.dumant@math.u-bordeaux.fr}
\begin{document}

\begin{abstract} 
    The purpose of this paper is to introduce the resonances of tridimensional Dirac operators by continuing meromorphically the resolvent and to establish a result about their localization : a kind of Rellich Theorem. We first consider the case of the Dirac operator in an external field which is essentially bounded and compactly supported. Secondly, we consider the case of the MIT bag model outside a smooth and bounded obstacle.
\end{abstract}

\maketitle

\section{Introduction}
Scattering resonances are the replacement of discrete spectral data for problems on non-compact domains. Several approaches have been developped for their study in the case of Schrödinger operators. For Dirac operators, most of the contributions are based on the method of \textit{complex scaling} that use mainly analytic microlocal analysis (see \cite{AgCo71,Se88}). With this method, a semi-classical limit of the resonances is studied in \cite{Pa91,Pa92} and a relativistic limit of the resonances is studied in \cite{AmBrNo01}. A \textit{limiting absorption principle}, which can be related to resonances, has also been shown in \cite{BaHe92}. The first purpose of this work is to adapt the \textit{potential and obstacle scattering}, formalized for Schrödinger operators in \cite[Theorem 1.1]{SjZw91} (see also \cite{Sj02,DyZw19}), to Dirac operators. We mention that, for Dirac operators, the potential scattering has been introduced for unidimensional and radial cases in \cite{IaKo14,IaKo15} or for the study of Poisson wave formulas in \cite{KuMe17,ChMe21}. The principle starts with the fact that if $L$ is an unbounded self-adjoint operator in a Hilbert space $\Hfrak$ then the resolvent function $z\mapsto (L-z)^{-1}$ is meromorphic from $\C\setminus\Sp_\ess(L)$ to $\Lcal(\Hfrak)$ and that the set of its poles is $\Sp_\disc(L)$. One idea is to encode the resonances as poles of the meromorphic continuation to an approriate Riemann surface of the resolvent, seen in suitable functional spaces. Such an idea first appeared in \cite{DoMcLTh66}. Then a natural question is the localization of the resonances in the surface. One of the classic results for Schrödinger operators is that a resonance may not live in the essential spectrum, result that sometimes wear the name of Rellich uniqueness Theorem (see \cite[Theorem 3.33, 3.35]{DyZw19}). In the semi-classical regime, much precise results mention that there are no resonances in a neighbourhood of the essential spectrum (see \cite{Bu98,Vo14,KlVo19} as well as the works cited therein). The second purpose of this work is to prove the Rellich uniqueness Theorem for some Dirac operators.  

\subsection{Preliminaries}
\subsubsection{Tridimensionnal massive Dirac operator} 
The three $2\times 2$ hermitian matrices
\[\sigma_1:=\begin{pmatrix} 0 & 1 \\ 1 & 0 \end{pmatrix} \mbox{, } \sigma_2:=\begin{pmatrix} 0 & -\imath \\ \imath & 0 \end{pmatrix} \mbox{ and } \sigma_3:=\begin{pmatrix} 1 & 0 \\ 0 & -1 \end{pmatrix}\]
are called the Pauli matrices. The four $4\times 4$ hermitian matrices
\[\beta:=\begin{pmatrix} I_2 & 0_2 \\ 0_2 & -I_2 \end{pmatrix} \mbox{ and } \alpha_j:=\begin{pmatrix} 0_2 & \sigma_j \\ \sigma_j & 0_2 \end{pmatrix} \mbox{ for } j\in\llbracket 1,3\rrbracket\]
are called the Dirac matrices. 
Formally, if $X:=(X_1,X_2,X_3)$ 
\[\alpha\cdot X:=\alpha_1X_1+\alpha_2X_2+\alpha_3X_3.\]
The Dirac operator associated with the energy of a particle living in $\R^3$, of mass $m\in\R$ and of spin $1/2$ is the first order differential operator $m\beta+\alpha\cdot D$ where we recall that $D_j=-\imath\partial_j$ for $j\in\llbracket 1,3\rrbracket$. It acts on four components distributions. In this paper, we will only discuss the case $m=1$ since a correct transformation permits to obtain all our results when $m\not=0$. A study of the case $m=0$ may also be interesting (see \cite{IaKo14} for this direction). To allege the reading, we set
\[\Dvec:=\beta+\alpha\cdot D.\]

\subsubsection{Fundamental solution} 
Let $z\in\C$. Using standard anti-commutation relations between the Dirac matrices (see \cite[Appendix 1.B]{Th91}), one computes 
\begin{equation}\label{formal_square_dirac}
    (\Dvec-zI_4)(\Dvec+zI_4)=-\Delta-(z^2-1)I_4.
\end{equation}
Consider $\omega\in\C$ such that $\omega^2=z^2-1$. It is well known that a fundamental solution $G_\omega$ of the Helmoltz operator $-\Delta-\omega^2$ can be defined by
\[G_\omega(\xvec)=\frac{e^{\imath\omega\lv\xvec\rv}}{4\pi\lv\xvec\rv} \mbox{ for } \xvec\not=\nulvec.\]
Considering \eqref{formal_square_dirac}, the distribution
\[F_{z,\omega}:=(\Dvec+z)(G_\omega I_4)\]
is a fundamental solution of $\Dvec-z$. For $\xvec\not=\nulvec$, one has 
\begin{equation}\label{DiracKernel}
    F_{z,\omega}(\xvec)=\frac{e^{\imath\omega\lv\xvec\rv}}{4\pi\lv\xvec\rv}\lp zI_4+\beta+(\imath+\omega\lv\xvec\rv)\alpha\cdot\frac{\xvec}{\lv\xvec\rv^2}\rp.
\end{equation}

\subsubsection{Dirac operator in an external field}\label{PotentialContext}
We will first perturb $\Dvec$ by an external field $V$  which may be think as an electric or a magnetic potential.

The operator $H_0:=\Dvec$ on the domain $\Dfrak:=\sob^1(\R^3)^4$ is self-adjoint in the ambient Hilbert space $\Hfrak:=\leb^2(\R^3)^4$, its spectrum is purely continuous and
\[\Sp(H_0)=]-\infty,-1]\cup[1,+\infty[\]
(see \cite[Theorem 1.1]{Th91}).
For $\phi\in\Hfrak$ and $z\not\in\Sp(H_0)$ one has
\begin{equation}\label{convol_res_libre}
    \lp H_0-z\rp^{-1}\phi=F_{z,(z^2-1)^{1/2}}*\phi
\end{equation}
where the power $1/2$ denotes the branch of the square root defined on $\C\setminus\left[0,+\infty\right[$ and valued in $\lcb z\in\C\tq\Im z>0\rcb$. 

The potential $V$ is a $4\times 4$ matrix which entries are compactly supported functions of $\leb^\infty(\R^3)$. One shows that the operator $H:=H_0+V$ on the domain $\Dfrak$ is closed and thanks to Weyl's Theorem
\[\Sp_\ess(H)=]-\infty,-1]\cup [1,+\infty[.\]
Rellich-Kato Theorem ensures that $H$ is self-adjoint when $V^*=V$.

\subsubsection{Dirac operator with MIT boundary condition}\label{ObstacleContext} 
In a second time, we will perturb $\Dvec$ by an obstacle. We impose the so-called MIT boundary condition at the boundary of the obstacle. Such a model is used to study confined particles of spin $1/2$ (see \cite{ChJaJoTh74,ChJaJoThWe74}) or is considered in general relativity (see \cite{Ba08}).

\begin{figure}
    \centering
    \begin{tikzpicture}
        \fill[gray!20] 
          (0,0) 
          .. controls (1,2.5) and (3,3) .. (4,2)
          .. controls (5,1) and (6,3) .. (7,2.5)
          .. controls (8,2) and (8,-2) .. (7,-2.5)
          .. controls (6,-3) and (5,-1.5) .. (4,-2)
          .. controls (3,-2.5) and (2,-3) .. (1,-2.5)
          .. controls (0,-2) and (-0.2,-0.5) .. (0,0);
        
        \draw[thick] 
          (0,0) 
          .. controls (1,2.5) and (3,3) .. (4,2)
          .. controls (5,1) and (6,3) .. (7,2.5)
          .. controls (8,2) and (8,-2) .. (7,-2.5)
          .. controls (6,-3) and (5,-1.5) .. (4,-2)
          .. controls (3,-2.5) and (2,-3) .. (1,-2.5)
          .. controls (0,-2) and (-0.2,-0.5) .. (0,0);

        \draw[->] (7.15,2.4) -- (6.5,1.7);
        
        \node at (-1.2,1) {$\Omega$};
        \node at (3.7,0) {$\R^3\setminus\Omega$};
        \node at (4.2,2.25) {$\partial\Omega$};
        \node at (7.0,1.8) {$\nvec$};
    \end{tikzpicture}
    \caption{}
    \label{FigureObstacle}
\end{figure}

The obstacle is $\R^3\setminus\Omega$ where $\Omega\subset\R^3$ is open, $\Ccal^2$-smooth and such that $\R^3\setminus\Omega$ is bounded. The outward pointing normal to the boundary of $\Omega$ is denoted $\nvec$ (see Figure \ref{FigureObstacle}). If $\psi\in\sob^1(\Omega)^4$ then $\psi_{\tq\partial\Omega}\in\leb^2(\partial\Omega)^4$ denotes its trace. The operator $H:=\Dvec$ on its domain 
\[\Dfrak:=\lcb\psi\in\sob^1(\Omega)^4\tq (-\imath\beta\alpha\cdot\nvec)\psi_{\tq\partial\Omega}=\psi_{\tq\partial\Omega}\rcb\]
 is self-adjoint in the ambient Hilbert space $\Hfrak:=\leb^2(\Omega)^4$ (see \cite[Theorem 3.2]{OuVe18}). Moreover
\[\Sp_\ess(H)=]-\infty,-1]\cup[1,+\infty[\]
(see \cite[Theorem 3.1]{BeBrZr24}). For all $\psi\in\Dfrak$
\begin{equation}\label{EquivalentNormMIT}
    \frac{1}{C}\lV H\psi\rV_{\leb^2(\Omega)^4}\leq \lV\psi\rV_{\sob^1(\Omega)^4}\leq C\lV H\psi\rV_{\leb^2(\Omega)^4}
\end{equation}
where $C>0$ does not depend on $\psi$. Since $H$ is closed on its domain we deduce that $\Dfrak$ is a Banach space when equipped with $\lV\cdot\rV_{\Dfrak}:=\lV\cdot\rV_{\sob^1(\Omega)^4}$. For completeness, \eqref{EquivalentNormMIT} is proven in the appendix.

\subsection{Statement of the results}\label{Statement} 
The first aim of the paper is to extend meromorphically the resolvent of $H$ across $\Sp_\ess(H)$. Consider the easiest case which is the one of the free Dirac operator. A natural way to continue the resolvent of $H_0$ is to use its convolution representation given by \eqref{convol_res_libre}. The question reduces to the continuation of $z\mapsto(z^2-1)^{1/2}$ across $(-\infty,-1]\cup[1,+\infty)$. Therefore we need the Riemann surface associated with this function. Specificly, we introduce
\[\Mpaz:=\lcb (z,\omega)\in\C^2\tq z^2-\omega^2=1\rcb\]
which we equip with an appropriate holomorphic atlas (the details are given in Section \ref{AnalyticalContext}). If $(z,\omega)\in\Mpaz$ with $\Im\omega>0$ then $z\not\in\Sp(H_0)$ and $(H_0-z)^{-1}$ can be considered as a function of $(z,\omega)$. Specificly for $\phi\in\Hfrak$
\[(H_0-z)^{-1}\phi=F_{z,\omega}*\phi.\]
When $(z,\omega)\in\Mpaz$ with $\Im\omega\leq 0$ we would like to define the continuation of $(H_0-z)^{-1}\phi$ as the convolution between $F_{z,\omega}$ and $\phi$. However, $F_{z,\omega}$ being only locally square integrable in that case, we need $\phi$ to be compactly supported.
The example of $H_0$ makes it clear that one needs to suit approriate functional spaces in order to continue the resolvent. For the operators $H$, we will consider 
\[\Hfrak_\comp=\lcb\phi\in\Hfrak\tq\Supp\phi \mbox{ is bounded}\rcb \mbox{ and } \Dfrak_\loc=\lcb\psi\in\Dcal'\tq\chi\psi\in\Dfrak\mbox{ for all } \chi\in\Ccal\rcb\]
where $\Dcal':=\Dcal'(\R^3)^4$, $\Ccal:=\Ccal^\infty_c(\R^3)$ in the context \ref{PotentialContext} and $\Dcal':=\Dcal'(\Omega)^4$, $\Ccal:=\Ccal^\infty_c(\overline{\Omega})$ in then context \ref{ObstacleContext}. One defines $\Lcal(\Hfrak_\comp,\Dfrak_\loc)$ as the vector space of the linear functions $T:\Hfrak_\comp\rightarrow\Dfrak_\loc$ such that $\chi T\tilde{\chi}$ is bounded from $\Hfrak$ to $\Dfrak$ for all $\chi,\tilde{\chi}\in\Ccal$.

\bigskip

We are able to state the first result of this work.

\begin{theorem}[Meromorphic continuations, resonances]\label{ResolventContinued} 
    Consider both contexts \ref{PotentialContext} and \ref{ObstacleContext}. There exists a finite meromorphic function from $\Mpaz$ to $\Lcal(\Hfrak_\comp,\Dfrak_\loc)$ denoted $\Rpaz$ such that if $(z,\omega)\in\Mpaz$ is not a pole of $\Rpaz$ and $\Im\omega>0$ then $z\not\in\Sp(H)$ and
    \[\Rpaz(z,\omega)=(H-z)^{-1} \mbox{ on } \Hfrak_\comp.\]
    A pole of $\Rpaz$ is called a resonance of $H$.
\end{theorem}

The notion of finite meromorphic function from $\Mpaz$ to $\Lcal(\Hfrak_\comp,\Dfrak_\loc)$ is precised in Section \ref{AnalyticalContext}. Although we focus ourselves on tridimensional Dirac operators, it should be possible to adapt our results in other dimensions once the Riemann surface involved is well understood (in even dimension, logarithmic singularities should appeared at the ramification points). Unlike the case of Schrödinger operators where one omits the abstract setting of the Riemann surface by parametrizing it with $\omega\mapsto\omega^2$, this setting is needed for Dirac operators as $z$ and $\omega$ appear simultaneously in \eqref{DiracKernel}. Moreover, it permits to work with a global definition that avoids treating cases according to the localization on the surface as in \cite{KuMe17,ChMe21}. 

\bigskip

We are now ready to state the major result of our paper. For Schrödinger operators, such a result is known as Rellich uniqueness Theorem (see \cite{Sj02,DyZw19}).

\begin{theorem}[Absence of real resonances]\label{Rellich}
    Let $(\lambda,\kappa)\in\Mpaz$ with $\kappa\in\R$.
    \begin{enumerate}
        \item In the context \ref{PotentialContext}, if $V^*=V$ and $\kappa\not=0$ then $(\lambda,\kappa)$ is not a resonance of $H$.
        \item In the context \ref{ObstacleContext},
        \begin{itemize}
            
            \item if $\kappa=0$ then $(\lambda,\kappa)$ is not a resonance of $H$.
            \item if $\Omega$ is connected and $\kappa\not=0$ then $(\lambda,\kappa)$ is not a resonance of $H$.
        \end{itemize}
    \end{enumerate}
\end{theorem}

\begin{remark}
    In the context \ref{PotentialContext}, $H$ can have an eigenvalue at $\pm1$ (see \cite{El00}). This explains the difference between the two assertions of Theorem \ref{Rellich}.
\end{remark}

\subsection{Structure of the paper} 
This paper is organized as follows. In Section \ref{free_scattering}, we study the holomorphic continuation of the free resolvent and provide some asymptotics in the space variable. In Section \ref{PotentialScattering}, we establish Theorem \ref{ResolventContinued} and \ref{Rellich} for the context \ref{PotentialContext} whereas the context \ref{ObstacleContext} is treated in Section \ref{ObstacleScattering}. Some parts of the proofs for the obstacle's case uses the ones of the potential's case. 

\section{The analytical setup}\label{AnalyticalContext}

In this section, we introduce the appropriate notions of analyticity needed. We prefer to be specific since our functions will be valued in $\Lcal(\Hfrak_\comp,\Dfrak_\loc)$ which is not a Banach space. Nevertheless we come back to $\Lcal(\Hfrak,\Dfrak)$ that has the Banach structure.

\bigskip

We start with the functions defined in an open subset of $\C$ and valued in $\Lcal(\Hfrak_\comp,\Dfrak_\loc)$. 

\begin{definition}\label{AnalyticC}
    Let $\Tpaz:\Wpaz\rightarrow\Lcal(\Hfrak_\comp,\Dfrak_\loc)$ with $\Wpaz\subset\C$ open. One says that $\Tpaz$ is holomorphic if $\zeta\mapsto\chi\Tpaz(\zeta)\tilde{\chi}$ is holomorphic from $\Wpaz$ to $\Lcal(\Hfrak,\Dfrak)$ for all $\chi,\tilde{\chi}\in\Ccal$.
\end{definition}

Now we discuss about the complex structure on $\Mpaz$. Thanks to the holomorphic implicit function Theorem, for all $(z_0,\omega_0)\in\Mpaz$ there exists an open subset $\Vpaz_{(z_0,\omega_0)}$ of $\Mpaz$ containing $(z_0,\omega_0)$ and an open subset $\Wpaz_{(z_0,\omega_0)}$ of $\C$ such that
\[\varphi_{(z_0,\omega_0)}:(z,\omega)\mapsto\left\{
    \begin{array}{ll}
        z \mbox{ if } \omega_0\not=0 \\
        \omega \mbox{ otherwise.} 
    \end{array}
\right.\]
is invertible from $\Vpaz_{(z_0,\omega_0)}$ to $\Wpaz_{(z_0,\omega_0)}$ with $\varphi_{(z_0,\omega_0)}^{-1}$ being holomorphic from $\Wpaz_{(z_0,\omega_0)}$ to $\C^2$. One checks that $\lcb(\Vpaz_\mfrak,\varphi_\mfrak)\tq\mfrak\in\Mpaz\rcb$ is an holomorphic atlas on $\Mpaz$. 
We equip $\Mpaz$ with the maximal holomorphic atlas $\Acal$ associated with $\lcb(\Vpaz_\mfrak,\varphi_\mfrak)\tq\mfrak\in\Mpaz\rcb$. Specificly $\Acal$ is the set of the local chart $(\Vpaz,\varphi)$ on $\Mpaz$ such that $\varphi\circ\varphi_\mfrak^{-1}$ is holomorphic on $\varphi_\mfrak(\Vpaz\cap\Vpaz_\mfrak)$ for all $\mfrak\in\Mpaz$ such that $\Vpaz\cap\Vpaz_\mfrak\not=\emptyset$.

\bigskip 

We can now define what is a holomorphic function defined in an open subset of $\Mpaz$ and valued in $\Lcal(\Hfrak_\comp,\Dfrak_\loc)$. 

\begin{definition}\label{AnalyticM}
    Let $\Tpaz:\Upaz\rightarrow\Lcal(\Hfrak_\comp,\Dfrak_\loc)$ with $\Upaz\subset\Mpaz$ open. One says that $\Tpaz$ is holomorphic if $\Tpaz\circ\varphi^{-1}$ is holomorphic from $\varphi(\Upaz\cap\Vpaz)$ to $\Lcal(\Hfrak_\comp,\Dfrak_\loc)$ for all $(\Vpaz,\varphi)\in\Acal$.
\end{definition}

We finally define a meromorphic function defined in $\Mpaz$ and valued in $\Lcal(\Hfrak_\comp,\Dfrak_\loc)$. 

\begin{definition}\label{Meromorphic}
    One calls finite meromorphic function from $\Mpaz$ to $\Lcal(\Hfrak_\comp,\Dfrak_\loc)$ any data $(\Zpaz,\Tpaz)$ such that
    \begin{enumerate}
        \item $\Zpaz$ is closed and discrete in $\Mpaz$
        \item $\Tpaz$ is holomorphic from $\Mpaz\setminus\Zpaz$ to $\Lcal(\Hfrak_\comp,\Dfrak_\loc)$
        \item\label{FiniteLaurent} for all $\mfrak\in\Zpaz$ there exists $(\Vpaz,\varphi)\in\Acal$ with $\mfrak\in\Vpaz$ and $T_1,...,T_L\in\Lcal(\Hfrak_\comp,\Dfrak_\loc)$ of finite rank such that 
        \[\zeta\mapsto\Tpaz(\varphi^{-1}(\zeta))-\sum_{\ell=1}^L\frac{T_\ell}{(\zeta-\varphi(\mfrak))^\ell}\]
        can be continued as an holomorphic function from a open neighbourhood of $\varphi(\mfrak)$ to $\Lcal(\Hfrak_\comp,\Dfrak_\loc)$.
    \end{enumerate}
    One calls $T_1$ the residue at $\mfrak$ of $\Tpaz$ in $(\Vpaz,\varphi)$. Any $\mfrak\in\Zpaz$ such that \eqref{FiniteLaurent} holds with $T_L\not=0$ is called a pole of $\Tpaz$.
\end{definition}

\begin{remark}\label{FollowCutoff}
    The set $\Lcal(\Hfrak_\comp,\Dfrak_\loc)$ does not benefit of a natural algebra structure. We shall check by hand that compositions are allowed and that the analytical properties are preserved. Since we want a weak dependance in the cut-offs, it will be crucial to follow them precisely in our proofs.
\end{remark}

\section{The free outgoing scattering resolvent}\label{free_scattering}
We start with the case of the free Dirac operator. Let us recall that, in this section, $\Hfrak=\leb^2(\R^3)^4$, $\Dfrak=\sob^1(\R^3)^4$ and $\Ccal=\Ccal^\infty_c(\R^3)$.
Thanks to the discussion at the begining of Subsection \ref{Statement}, we set 
for $\mfrak\in\Mpaz$ and $\phi\in\Hfrak_\comp$
\[\Rpaz_0(\mfrak)\phi:=F_\mfrak*\phi.\]
Theorem \ref{HolomorphyR0} is about the holomorphy of $\Rpaz_0$ whereas some crucial asymptotics are given in Proposition \ref{FreeResolventAsymptotics}.

\begin{theorem}\label{HolomorphyR0}
    The function $\Rpaz_0$ is holomorphic from $\Mpaz$ to $\Lcal(\Hfrak_\comp,\Dfrak_\loc)$. 
    
    Specificly, for all $(\Vpaz,\varphi)\in\Acal$ and all $\zeta_0\in\varphi(\Vpaz)$ there exists a sequence $(A_\ell)_{\ell\geq 0}$ of $\Lcal(\Hfrak_\comp,\Dfrak_\comp)$ such that for all $\chi,\tilde{\chi}\in\Ccal$ and all $\zeta$ in a neighbourhood of $\zeta_0$ 
    \[\chi \Rpaz_0(\varphi^{-1}(\zeta))\tilde{\chi}=\sum_{\ell=0}^{+\infty}(\zeta-\zeta_0)^\ell\chi A_\ell\tilde{\chi}\]
    where the convergence of the series holds in $\Lcal(\Hfrak,\Dfrak)$.
\end{theorem}

\begin{remark} 
    In other words, $H_0$ does not have resonances. Also, we hilight the dependance in the cut-offs, as explained in Remark \ref{FollowCutoff}.
\end{remark}

We will need the next Lemma for the proof of Theorem \ref{HolomorphyR0}.

\begin{lemma}\label{EllipticDirac}
    Let $\psi\in\Hfrak$ such that $\Dvec\psi\in\Hfrak$. Then $\psi\in\Dfrak$ and
    \[\lV\psi\rV_{\Dfrak}=C\lV\Dvec\psi\rV_{\Hfrak}\]
    with $C>0$ independent of $\psi$.
\end{lemma}

\begin{proof} Since for $\xi\in\R^3$
    \[(\beta+\alpha\cdot\xi)^*(\beta+\alpha\cdot\xi)=(1+\lv\xi\rv^2)I_4\]
    one has
    \begin{align*}
        \int_{\R^3}\lv\Fcal(\Dvec\psi)(\xi)\rv^2d\xi&=\int_{\R^3}\lv(\beta+\alpha\cdot\xi)\Fcal\psi(\xi)\rv^2d\xi\\
        &=\int_{\R^3}\lv\la\xi\ra\Fcal\psi(\xi)\rv^2d\xi
    \end{align*}
    so the conclusion follows from Plancherel's Theorem.
\end{proof}

We can now show Proposition \ref{HolomorphyR0}.

\begin{proof} 
    For the reading we set
    \[R_0(\zeta):=\chi \Rpaz_0(\varphi^{-1}(\zeta))\tilde{\chi} \mbox{ , } f_\zeta:=F_{\varphi^{-1}(\zeta)} \mbox{ for } \zeta\in\Wpaz:=\varphi(\Vpaz).\]
    By maximality of the atlas $\Apaz$, $\varphi^{-1}$ is holomorphic from $\Wpaz$ to $\C^2$. Let us introduce, for $\xvec\not=\nulvec$ and $\ell\geq 0$
    \[a_\ell(\xvec):=\frac{1}{\ell!}\frac{d^\ell}{d\zeta^\ell}\lp f_\zeta(\xvec)\rp_{\tq\zeta=\zeta_0}.\]
    It is clear that each entry of $a_\ell$ is a function of $\leb^1_\loc(\R^3)$. We denote by $A_\ell$ the convolution operator by $a_\ell$. The Young estimate for the convolution ensures that $R_0(\zeta)$ and $\chi A_\ell\tilde{\chi}$ are bounded operators of $\Hfrak$ and for all $L\geq 0$ and $\zeta\in\Wpaz$
    \begin{equation}\label{AnalyticEstimateR0}
        \lV R_0(\zeta)-\sum_{\ell=0}^{L}(\zeta-\zeta_0)^\ell\chi A_\ell\tilde{\chi}\rV_{\Lcal(\Hfrak)}\leq C\int_{K}\lv f_\zeta(\xvec)-\sum_{\ell=0}^L(\zeta-\zeta_0)^\ell a_\ell(\xvec)\rv d\xvec
    \end{equation}
    where $C$ is a positive constant and $K$ is a compact that only depend on the supports of $\chi$ and $\tilde{\chi}$. The upper bound from \eqref{AnalyticEstimateR0} goes to zero as $L$ goes to infinity for all $\zeta$ in a neighbourhood of $\zeta_0$. For those $\zeta$
    \begin{equation}\label{AnalyticityR0}
        R_0(\zeta)=\sum_{\ell=0}^{+\infty}(\zeta-\zeta_0)^\ell\chi A_\ell\tilde{\chi}
    \end{equation}
    where the convergence holds in $\Lcal(\Hfrak)$. 
    
    Until the end of the proof, we wish to show that \eqref{AnalyticityR0} holds in $\Lcal(\Hfrak,\Dfrak)$. For $\zeta\in\Wpaz$, since $f_\zeta$ is a fundamental solution of $\Dvec-z(\zeta)I_4$ one has
    \begin{equation}\label{inverse_tronquée}
        \Dvec R_0(\zeta)=\chi\tilde{\chi}+z(\zeta)R_0(\zeta)+\alpha\cdot(D\chi)\Rpaz_0(\zeta)\tilde{\chi}.
    \end{equation} 
    Thanks to \eqref{inverse_tronquée} and Lemma \ref{EllipticDirac}, $R_0(\zeta)$ is bounded from $\Hfrak$ to $\Dfrak$ for any $\zeta\in\Wpaz$. Let us write 
    \[B_\ell:=\frac{1}{\ell!}\frac{d^\ell}{d\zeta^\ell}(H_0 R_0(\zeta))_{\tq\zeta=\zeta_0}.\]
    First $\chi A_0\tilde{\chi}=R_0(\zeta_0)\in\Lcal(\Hfrak,\Dfrak)$ and $H_0\chi A_0\tilde{\chi}=B_0$. For all $\psi\in\Hfrak$
    \[\lim_{\zeta\rightarrow\zeta_0}\frac{R_0(\zeta)-R_0(\zeta_0)}{\zeta-\zeta_0}\psi=(\chi A_1\tilde{\chi})\psi \mbox{ and } \lim_{\zeta\rightarrow\zeta_0} H_0\lp\frac{R_0(\zeta)-R_0(\zeta_0)}{\zeta-\zeta_0}\psi\rp=B_1\psi\]
    but $H_0$ being closed this yields $(\chi A_1\tilde{\chi})\psi\in\Dfrak$ and 
    \[H_0\lp(\chi A_1\tilde{\chi})\psi)\rp=B_1\psi.\]
    One deduces of Lemma \ref{EllipticDirac} that $\chi A_1\tilde{\chi}$ is bounded from $\Hfrak$ to $\Dfrak$. Iterating the argument, one obtains that $\chi A_\ell\tilde{\chi}\in\Lcal(\Hfrak,\Dfrak)$ and $H_0\chi A_\ell\tilde{\chi}=B_\ell$ for all $\ell\in\N$. Again by Lemma \ref{EllipticDirac} one has
    \[\lV R_0(\zeta)-\sum_{\ell=0}^L(\zeta-\zeta_0)^\ell\chi A_\ell\tilde{\chi}\rV_{\Lcal(\Hfrak,\Dfrak)}=
    C\lV H_0R_0(\zeta)-\sum_{\ell=0}^L(\zeta-\zeta_0)^\ell B_\ell\rV_{\Lcal(\Hfrak)}\]
    for all $L\geq 0$. Taking $L\rightarrow+\infty$ gives the result.
\end{proof}

\begin{proposition}\label{FreeResolventAsymptotics}
    Let $\phi\in\Hfrak_\comp$ and $(z,\omega)\in\Mpaz$. The function $\Rpaz_0(z,\omega)\phi$ is smooth on $\R^3\setminus\Supp\phi$. Moreover as $r\rightarrow+\infty$
    \[\Rpaz_0(z,\omega)\phi(r\svec)=\frac{e^{\imath\omega r}}{4\pi r}\lp\lp zI_4+\beta+\omega\alpha\cdot\svec\rp\int_{\R^3}e^{-\imath\omega\svec\cdot\xvec}\phi(\xvec)d\xvec+O\lp\frac{1}{r}\rp\rp\]
    \[D_j\Rpaz_0(\pm1,0)\phi(r\svec)=\frac{\imath s_j}{4\pi r^2}(\pm I_4+\beta)\int_{\R^3}\phi+O\lp\frac{1}{r^3}\rp \mbox{ , } j\in\llbracket 1,3\rrbracket\]
    uniformly in $\svec\in\R^3$ such that $\lv\svec\rv=1$.
\end{proposition}

\begin{proof} 
    The regularity result is a straight consequence of the dominated convergence Theorem. We move on the asymptotics. One easily computes, as $\lv\yvec\rv\rightarrow+\infty$ 
    \[F_{z,\omega}(\yvec)=\frac{e^{\imath\omega\lv\yvec\rv}}{4\pi\lv\yvec\rv}\lp zI_4+\beta+\omega\alpha\cdot\frac{\yvec}{\lv\yvec\rv}+O\lp\frac{1}{\lv\yvec\rv}\rp\rp\]
    and 
    \[D_jF_{\pm1,0}(\yvec)=\frac{1}{4\pi\lv\yvec\rv^2}\lp \frac{\imath y_j}{\lv\yvec\rv}(\pm I_4+\beta)+O\lp\frac{1}{\lv\yvec\rv}\rp\rp.\]
    Let $\xvec\in\Supp\phi$. As $r\rightarrow+\infty$, one has
    \[\lv r\svec-\xvec\rv=r-\svec\cdot\xvec+O\lp\frac{1}{r}\rp\]
    and hence
    \[\frac{1}{\lv r\svec-\xvec\rv}=\frac{1}{r}\lp 1+O\lp\frac{1}{r}\rp\rp \mbox{ and } e^{\imath\omega\lv r\svec-\xvec\rv}=e^{\imath\omega r}\lp e^{-\imath\omega\svec\cdot\xvec}+O\lp\frac{1}{r}\rp\rp.\]
    Since $\lv\svec\rv=1$ and $\xvec\in\Supp\phi$, all those asymptotics are uniform in $(\svec,\xvec)$. Therefore the asymptotics follow by integrating against $\phi$.
\end{proof}

\section{Potential scattering}\label{PotentialScattering}
In this section, we consider the context \ref{PotentialContext}. We construct a meromorphic continuation of the resolvent of $H$ and study the appearing singularities. The ideas and techniques used in the proofs of the following results are standard and attributed to  Sjöstrand, Zworski and Dyatlov \cite{Sj02,DyZw19}. For completeness, we give all the details, particularly concerning the cut-offs (see Remark \ref{FollowCutoff}). 

\bigskip

We recall that we still have $\Hfrak=\leb^2(\R^3)^4$, $\Dfrak=\sob^1(\R^3)^4$ and $\Ccal=\Ccal^\infty_c(\R^3)$.

\subsection{Resonances of $H$}
The next Theorem is a more precise version of Theorem \ref{ResolventContinued} in the context \ref{PotentialContext}.

\begin{theorem}\label{PotentialResolventContinued}
    There exists a finite meromorphic function from $\Mpaz$ to $\Lcal(\Hfrak_\comp,\Dfrak_\loc)$ denoted $(\Zpaz,\Rpaz)$ such that if $(z,\omega)\not\in\Zpaz$ and $\Im\omega>0$ then $z\not\in\Sp(H)$ and
    \[\Rpaz(z,\omega)=(H-z)^{-1} \mbox{ on } \Hfrak_\comp.\]
    Any element of $\Zpaz$ is a pole of $\Rpaz$ and
    \[\Zpaz=\lcb (\lambda,\kappa)\in\Mpaz\tq\exists\psi\in\Ran(\Rpaz_0(\lambda,\kappa)) \mbox{ s.t. } \psi\not=0 \mbox{ and }(\Dvec+V)\psi=\lambda\psi\rcb.\]
\end{theorem}

\begin{definition}
    The resonances of $H$ are the poles of $\Rpaz$. Their set is denoted $\Res(H)$. 
\end{definition}

\begin{remark}\label{RemarkResonantState}
    Associated with a resonance $(\lambda,\kappa)$, we recover a generalized eigenvalue problem according to the description of $\Zpaz$. In general, the corresponding function $\psi$ can not be square integrable. The integrability is replaced by the condition $\psi\in\Ran(\Rpaz_0(\lambda,\kappa))$. In particular, Proposition \ref{FreeResolventAsymptotics} provides a behaviour of $\psi$ at $\infty$. One says that $\psi$ is outgoing (as for Schrödinger operators). The Subsection \ref{PotentialResonantState} is dedicated to the study of such functions.
\end{remark}

Let us give the proof of Theorem \ref{PotentialResolventContinued}. The notation $f\prec g$ means that $g$ is equal to $1$ on the support of $f$.

\begin{proof} 
    Let $z\not\in\Sp(H)$. We start with 
    \begin{equation}\label{eq_res_dirac_élec_temp1}
        H-z=\lp 1+V(H_0-z)^{-1}\rp (H_0-z).
    \end{equation}
    Take $\rho\in\Ccal$ such that $V\prec\rho$. Then
    \begin{equation}\label{eq_res_dirac_élec_temp2}
        1+V(H_0-z)^{-1}=\lp 1+V(H_0-z)^{-1}(1-\rho)\rp\lp 1+V(H_0-z)^{-1}\rho\rp.
    \end{equation}
    But $1+V(H_0-z)^{-1}(1-\rho)$ and $1-V(H_0-z)^{-1}(1-\rho)$ are inverse of one another therefore
    \begin{equation}\label{eq_res_dirac_élec_fin}
        (H-z)^{-1}=(H_0-z)^{-1}\lp 1+V(H_0-z)^{-1}\rho\rp^{-1}\lp 1-V(H_0-z)^{-1}(1-\rho)\rp.
    \end{equation}
    Then it is natural to define $\Rpaz(\mfrak)$ by replacing $(H_0-z)^{-1}$ by $\Rpaz_0(\mfrak)$ in the r.h.s from \eqref{eq_res_dirac_élec_fin}. Precisely, we set 
    \[\Zpaz:=\lcb\mfrak\in\Mpaz\tq 1+V\Rpaz_0(\mfrak)\rho\mbox{ is not invertible}\rcb\]
    and for $\mfrak\in\Mpaz\setminus\Zpaz$
    \begin{equation}\label{def_R_V}
        \Rpaz(\mfrak):=\Rpaz_0(\mfrak)\lp 1+V\Rpaz_0(\mfrak)\rho\rp^{-1}\lp 1-V \Rpaz_0(\mfrak)(1-\rho)\rp.
    \end{equation}
    This makes sense since both $1-V\Rpaz_0(\mfrak)(1-\rho)$ and $\lp 1+V\Rpaz_0(\mfrak)\rho\rp^{-1}$
    map $\Hfrak_\comp$ into itself. Indeed, for all $\Tilde{\rho}\in\Ccal$ such that $\rho\prec\Tilde{\rho}$ we have
    \begin{equation}\label{2mapL2c}
        \lp 1+V\Rpaz_0(\mfrak)\rho\rp^{-1}\Tilde{\rho}=\Tilde{\rho}\lp 1+V\Rpaz_0(\mfrak)\rho\rp^{-1}.
    \end{equation}
    since 
    \[\lp 1+V\Rpaz_0(\mfrak)\rho\rp\Tilde{\rho}=\Tilde{\rho}\lp 1+V\Rpaz_0(\mfrak)\rho\rp.\]

    \bigskip
    
    If $(z,\omega)\not\in\Zpaz$ and $\Im\omega>0$ then $z\not\in\Sp(H_0)$ and 
    \[1+V(H_0-z)^{-1}\rho=1+V\Rpaz_0(z,\omega)\rho\]
    is invertible. Then $1+V(H_0-z)^{-1}$ is invertible by \eqref{eq_res_dirac_élec_temp2} so $z\not\in\Sp(H)$ by  \eqref{eq_res_dirac_élec_temp1}. Hence \eqref{eq_res_dirac_élec_fin} yield
    \[\Rpaz(z,\omega)=(H-z)^{-1} \mbox{ on } \Hfrak_\comp.\]
    
    \bigskip
    
    We must now show the meromorphy of $\Rpaz$. The game is to apply the Fredholm analytic Theorem \cite[Theorem VI.14]{ReSi72} to the holomorphic function
    \[\mfrak\in\Mpaz\mapsto 1+V\Rpaz_0(\mfrak)\rho\in\Lcal(\Hfrak).\] 
    For this, we must show that $V\Rpaz_0(\mfrak)\rho$ is a compact operator for every $\mfrak\in\Mpaz$ and prove that $\Zpaz\not=\Mpaz$.
    
    Let $\mfrak\in\Mpaz$ and $\tilde{\rho}\in\Ccal$ such that $\rho\prec\tilde{\rho}$. Thanks to Rellich-Kondrachov Theorem, the multiplication by $\tilde{\rho}$ defines a compact operator from $\Dfrak$ to $\Hfrak$. Consequently $\rho \Rpaz_0(\mfrak)\rho=\tilde{\rho}\rho \Rpaz_0(\mfrak)\rho$ is compact on $\Hfrak$ since $\rho \Rpaz_0(\mfrak)\rho$ is bounded from $\Hfrak$ to $\Dfrak$. Since the multiplication by $V$ defines a bounded operator on $\Hfrak$ then $V \Rpaz_0(\mfrak)\rho=V\rho \Rpaz_0(\mfrak)\rho$ is also compact on $\Hfrak$. 
    
    Let $z\in\C$ with $\lv\Im z\rv>\lV V\rV_{\infty}\lV \rho\rV_{\infty}$. Then 
    \[V\Rpaz\lp z,(z^2-1)^{1/2}\rp\rho=V(H_0-z)^{-1}\rho\] 
    and it yields
    \[\lV V\Rpaz\lp z,(z^2-1)^{1/2}\rp\rho\rV_{\Lcal\lp\Hfrak\rp}\leq\frac{\lV V\rV_{\infty}\lV \rho\rV_{\infty}}{\lv\Im z\rv}<1\]
    so that $\lp z,(z^2-1)^{1/2}\rp\in\Mpaz\setminus\Zpaz$ by the Neumann invertibility criterion.
    
    Thanks to the Fredholm analytic Theorem
    \begin{itemize}
        \item $\Zpaz$ is closed and discrete
        \item $\mfrak\mapsto\lp 1+V\Rpaz_0(\mfrak)\rho\rp^{-1}$ is holomorphic from $\Mpaz\setminus\Zpaz$ to $\Lcal(\Hfrak)$
        \item for all $\mfrak\in\Zpaz$ and all $(\Vpaz,\varphi)\in\Acal$ with $\mfrak\in\Vpaz$ there exists a sequence $(B_\ell)_{\ell\geq -L}$ of $\Lcal(\Hfrak)$ with $B_{-L},...,B_{-1}$ of finite rank and $B_{-L}\not=0$ such that for all $\zeta$ in a punctured neighbourhood of $\varphi(\mfrak)$
        \begin{equation}\label{Second}
            \lp 1+V\Rpaz_0(\varphi^{-1}(\zeta))\rho\rp^{-1}=\sum_{\ell=-L}^{+\infty}(\zeta-\varphi(\mfrak))^\ell B_\ell
        \end{equation}
        where the convergence of the series hold in $\Lcal(\Hfrak)$.
    \end{itemize}
    
    Let $\chi,\tilde{\chi}\in\Ccal$. Pick $\Tilde{\rho}\in\Ccal$ such that $\rho,\tilde{\chi}\prec\Tilde{\rho}$. For all $\mfrak\not\in\Zpaz$ one has
    \begin{equation}\label{1mapL2c}
        \lp1-V\Rpaz_0(\mfrak)(1-\rho)\rp\tilde{\chi}=\Tilde{\rho}\lp\tilde{\chi}-V \Rpaz_0(\mfrak)\tilde{\chi}(1-\rho)\rp
    \end{equation}
    therefore \eqref{2mapL2c} yields
    \begin{equation}\label{G+2noms}
        \chi \Rpaz(\mfrak)\tilde{\chi}=\chi \Rpaz_0(\mfrak)\Tilde{\rho}\lp 1+V\Rpaz_0(\mfrak)\rho\rp^{-1}\lp\tilde{\chi}-V\Rpaz_0(\mfrak)\tilde{\chi}(1-\rho)\rp.
    \end{equation}
    In particular $\Rpaz$ is holomorphic from $\Mpaz\setminus\Zpaz$ to $\Lcal(\Hfrak_\comp,\Dfrak_\loc)$.

    Now let $\mfrak\in\Zpaz$ and $(\Vpaz,\varphi)\in\Acal$ with $\mfrak\in\Vpaz$. Thanks to Proposition \ref{HolomorphyR0}, for all $\zeta$ in a neighbourhood of $\varphi(\mfrak)$
    \begin{equation}\label{First}
        \chi \Rpaz_0(\varphi^{-1}(\zeta))\Tilde{\rho}=\sum_{\ell=0}^{+\infty}(\zeta-\varphi(\mfrak))^\ell\chi A_\ell\Tilde{\rho},
    \end{equation}
    \begin{equation}\label{Third}
        \tilde{\chi}-V\Rpaz_0(\varphi^{-1}(\zeta))\tilde{\chi}(1-\rho)=\sum_{\ell=0}^{+\infty}(\zeta-\varphi(\mfrak))^\ell C_\ell\tilde{\chi}
    \end{equation}
    with
    \[C_\ell:=\left\{
    \begin{array}{ll}
        1-VA_0(1-\rho) \mbox{ if } \ell=0 \\
        -VA_\ell(1-\rho) \mbox{ otherwise} 
    \end{array},
    \right.\]
    where \eqref{First} holds in $\Lcal(\Hfrak,\Dfrak)$ and \eqref{Third} holds in $\Lcal(\Hfrak)$. One deduces from \eqref{G+2noms} that for all $\zeta$ in a punctured neighbourhood of $\varphi(\mfrak)$
    \[\chi \Rpaz(\varphi^{-1}(\zeta))\tilde{\chi}=\sum_{\ell=-L}^{+\infty}(\zeta-\varphi(\mfrak))^\ell\sum_{(k,m,n)\in I_\ell}\chi A_k\Tilde{\rho} B_m C_n\tilde{\chi}\]
    with 
    \[I_\ell:=\lcb(k,m,n)\in\N\times\Z\times\N\tq m\geq-L, k+m+n=\ell\rcb\]
    where the convergence of the series hold in $\Lcal(\Hfrak,\Dfrak)$. But \eqref{2mapL2c} and \eqref{1mapL2c} yield
    $B_m\Tilde{\rho}=\Tilde{\rho} B_m$ and $C_n\tilde{\chi}=\Tilde{\rho}C_n\tilde{\chi}$. Since $\chi, \tilde{\chi}$ and $\Tilde{\rho}$ are arbitrary, $B_m$ maps $\Hfrak_\comp$ into itself so
    \[D_\ell:=\sum_{(k,m,n)\in I_\ell}A_kB_m C_n\in\Lcal(\Hfrak_\comp,\Dfrak_\loc)\]
    makes sense and 
    \[\sum_{(k,m,n)\in I_\ell}\chi A_k\Tilde{\rho} B_m C_n\tilde{\chi}=\chi D_\ell\tilde{\chi}.\]
    It is now obvious that the continuation of 
    \[\zeta\mapsto \Rpaz(\varphi^{-1}(\zeta))-\sum_{\ell=1}^{L}\frac{D_{-\ell}}{(\zeta-\varphi(\mfrak))^\ell}\]
    by $D_0$ in $\varphi(\mfrak)$ is holomorphic from a neighbourhood of $\varphi(\mfrak)$ to $\Lcal(\Hfrak_\comp,\Dfrak_\loc)$.

    To conclude that $(\Zpaz,\Rpaz)$ is a finite meromorphic function from $\Mpaz$ to $\Lcal(\Hfrak_\comp,\Dfrak_\loc)$ we shall show that $D_{-L},...,D_{-1}$ are of finite rank. If $\ell\leq -1$ and $(k,m,n)\in I_\ell$ then $m\leq-1$ therefore $B_m$ has a finite rank and so does $D_\ell$.

    \bigskip
    
    We now show that each element of $\Zpaz$ is a pole of $\Rpaz$. Pick $\pfrak\in\Zpaz$. We use the same notations as above. We shall show that $D_{-L}\not=0$.  For all $\mfrak\not\in\Zpaz$
    \begin{equation}\label{res_are_pole}
        \lp 1+V\Rpaz_0(\mfrak)\rho\rp^{-1}=1-V\Rpaz(\mfrak)\rho.
    \end{equation}
    Indeed, with $\lp 1+V\Rpaz_0(\mfrak)\rho\rp\lp 1+V\Rpaz_0(\mfrak)\rho\rp^{-1}=1$ one gets 
    \[\lp 1+V\Rpaz_0(\mfrak)\rho\rp^{-1}=1-V\Rpaz_0(\mfrak)\rho\lp 1+V\Rpaz_0(\mfrak)\rho\rp^{-1}.\]
    But, since $\rho-1=(\rho-1)\lp 1+V\Rpaz_0(\mfrak)\rho\rp$, we have
    \begin{align*}
        \rho\lp 1+V\Rpaz_0(\mfrak)\rho\rp^{-1}&=\rho-1+\lp 1+V\Rpaz_0(\mfrak)\rho\rp^{-1}\\
        &=\lp 1+V\Rpaz_0(\mfrak)\rho\rp^{-1}\lp 1-V\Rpaz_0(\mfrak)(1-\rho)\rp\rho
    \end{align*}
    so \eqref{res_are_pole} follows from \eqref{def_R_V}. Thanks to \eqref{res_are_pole} and the uniqueness of the Laurent expansion at $\varphi(\pfrak)$ we conclude that $B_{-L}=-VD_L\rho$. In particular $D_L\not=0$ since $B_{-L}\not=0$.

    It remains to characterize $\Zpaz$. Take $(\lambda,\kappa)\in\Zpaz$. The operator $1+V\Rpaz_0(\lambda,\kappa)\rho$ is not invertible and then not injective since $V\Rpaz_0(\lambda,\kappa)\rho$ is compact on $\Hfrak$. Pick $\phi\in\Hfrak$ with $\phi\not=0$ such that $\lp1+V\Rpaz_0(\lambda,\kappa)\rho\rp\phi=0$. We set $\psi:=\Rpaz_0(\lambda,\kappa)(\rho\phi)\not=0$ so that 
    \begin{align*}
        (\Dvec+V-\lambda I_4)\psi&=\rho\phi+V\Rpaz_0(\lambda,\kappa)(\rho\phi)\\
        &=\rho\lp1+V\Rpaz_0(\lambda,\kappa)\rho\rp\phi\\
        &=0.
    \end{align*}
    Conversely, take $(\lambda,\kappa)\in\Mpaz$ and assume that there exists $\phi\in\Hfrak_\comp$ such that
    \[\psi:=\Rpaz_0(\lambda,\kappa)\phi\not=0 \mbox{ and } (\Dvec+V)\psi=\lambda\psi.\] 
    Then
    \begin{align*}
        \phi&=(\Dvec-\lambda I_4)(\Rpaz_0(\lambda,\kappa)\phi)\\
        &=-V\Rpaz_0(\lambda,\kappa)\phi.
    \end{align*}
    It follows that $\rho\phi=\phi$ and then $\lp 1+V\Rpaz_0(\lambda,\kappa)\rho\rp\phi=0$. Eventually $\phi\not=0$ since $\psi\not=0$. Hence $(\lambda,\kappa)\in\Zpaz$.
\end{proof}

\subsection{Absence of resonances of $H$} 
This part of the section is dedicated to the proof of Theorem \ref{Rellich} in the context \ref{PotentialContext}, that is the following Theorem.

\begin{theorem}\label{RellichPotential}
    Assume that $V^*=V$. If $(\lambda,\kappa)\in\Mpaz$ with $\kappa\in\R\setminus\lcb 0\rcb$ then $(\lambda,\kappa)$ is not a resonance of $H$.
\end{theorem}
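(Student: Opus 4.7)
The plan is to argue by contradiction. Suppose $(\lambda,\kappa)\in\riem$ with $\kappa\in\R\setminus\lcb 0\rcb$ is a resonance of $H_V$. The characterization of $\pazocal{Z}_V$ in Theorem~\ref{prolon_res_elec_dirac} furnishes $\psi\in\sob^1_\loc(\R^3)^4$, $\psi\neq 0$, of the form $\psi=R_0(\lambda,\kappa)f$ with $f:=-V\psi\in\leb^2_\comp(\R^3)^4$ and $(\Dvec+V)\psi=\lambda\psi$. Since $\lambda^2=1+\kappa^2$ with $\kappa$ real nonzero, $\lambda$ is real with $|\lambda|>1$; in particular $\lambda\neq 0$. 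For $R$ large enough that $B_R:=\lcb|\xvec|<R\rcb$ contains $\Supp V$, $\psi$ is smooth near $\partial B_R$ (Proposition~\ref{asymptotique_resolvante_libre}), and integration by parts together with the hermiticity of $\beta$, $V$ and the reality of $\lambda$ yields the flux identity
\begin{equation*}
    0=\Im\int_{B_R}\la\psi,(\Dvec+V-\lambda)\psi\ra\,d\xvec=-\frac{1}{2}\int_{|\xvec|=R}\la\psi,(\alpha\cdot\nvec)\psi\ra\,dS.
\end{equation*}

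Next I would insert the outgoing expansion of Proposition~\ref{asymptotique_resolvante_libre},
\begin{equation*}
    R\psi(R\svec)=\frac{e^{\imath\kappa R}}{4\pi}A(\svec)+O(1/R),\qquad A(\svec):=(\lambda I_4+\beta+\kappa\alpha\cdot\svec)\int_{\R^3}e^{-\imath\kappa\svec\cdot\yvec}f(\yvec)\,d\yvec,
\end{equation*}
into the boundary identity and let $R\to+\infty$ to obtain $\int_{\Sph^2}\la A(\svec),(\alpha\cdot\svec)A(\svec)\ra\,d\svec=0$. Setting $M(\svec):=\beta+\kappa\alpha\cdot\svec$, the anticommutation $\lcb\beta,\alpha_j\rcb=0$, the squaring relations $\beta^2=(\alpha\cdot\svec)^2=I_4$ and the mass-shell constraint $\lambda^2=1+\kappa^2$ combine to give the two spinor identities
\begin{equation*}
    (\lambda I_4+M)^2=2\lambda(\lambda I_4+M),\qquad (\lambda I_4+M)(\alpha\cdot\svec)(\lambda I_4+M)=2\kappa(\lambda I_4+M).
\end{equation*}
Sandwiching between the Fourier integral above, these yield the pointwise identity $\la A(\svec),(\alpha\cdot\svec)A(\svec)\ra=(\kappa/\lambda)|A(\svec)|^2$; since $\kappa/\lambda\neq 0$, the angular integral forces $A\equiv 0$ on $\Sph^2$.

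It remains to propagate $A\equiv 0$ to $\psi\equiv 0$. Inspecting the kernel $F_{\lambda,\kappa}$ past its leading order sharpens the asymptotic to $\psi(\xvec)=O(|\xvec|^{-2})$ at infinity. Outside $\Supp V$, applying $\Dvec+\lambda$ to $(\Dvec-\lambda)\psi=0$ and using $\Dvec^2=-\Delta+I_4$ shows that each component of $\psi$ satisfies the Helmholtz equation $(-\Delta-\kappa^2)\psi=0$, so Rellich's classical uniqueness theorem at the real nonzero frequency $\kappa$ forces $\psi$ to vanish outside a sufficiently large ball. A standard unique continuation result for first-order Dirac systems with $\leb^\infty$ potentials then yields $\psi\equiv 0$ on $\R^3$, contradicting $\psi\neq 0$. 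The main obstacle I expect is the spinor algebra producing $\la A,(\alpha\cdot\svec)A\ra=(\kappa/\lambda)|A|^2$: the radiation matrix $\alpha\cdot\svec$ must sandwich compatibly with the amplitude $\lambda I_4+\beta+\kappa\alpha\cdot\svec$, and the hypothesis $\kappa\in\R\setminus\lcb 0\rcb$ enters precisely to prevent the ratio $\kappa/\lambda$ from degenerating—this is exactly the feature that forbids the argument from crossing the threshold $\kappa=0$ (where $\lambda=\pm 1$) and explains why that case is excluded from the statement.
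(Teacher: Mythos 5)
Your argument is correct, and the first and last stages coincide with the paper's: the flux identity $\int_{\Sph_1}\psi(r\svec)^*(\alpha\cdot\svec)\psi(r\svec)\,d\sigma_1=0$, the spinor identity $(\lambda+M)(\alpha\cdot\svec)(\lambda+M)=2\kappa(\lambda+M)$ forcing the leading amplitude $A$ to vanish, and a unique continuation step to kill the compactly supported resonant state (the paper makes your ``standard unique continuation result'' explicit via the Carleman estimate of Berthier--Georgescu \cite{BeGe87}, which is worth citing precisely since $V$ is merely $\leb^\infty$). Where you genuinely diverge is the middle step. The paper deduces compact support of $\psi=R_0(\lambda,\kappa)\phi$ by a Fourier-analytic division argument: it analytically continues the vanishing of $(\lambda+\beta+\alpha\cdot\xi)\four\phi(\xi)$ from the real sphere $\lv\xi\rv=\lv\kappa\rv$ to the complex quadric $\xi^2=\kappa^2$, divides the entire function $\four((\Dvec+\lambda)\phi)$ by $\xi^2-\kappa^2$ holomorphically (this is the technically delicate part, done with local complex spherical coordinates), and invokes the Paley--Wiener--Schwartz theorem. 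You instead observe that $A\equiv0$ upgrades the expansion of Proposition \ref{asymptotique_resolvante_libre} to $\psi=O(r^{-2})$, so $\int_{\lv\xvec\rv=R}\lv\psi\rv^2\to0$, and the classical Rellich lemma for the Helmholtz equation $(-\Delta-\kappa^2)\psi=0$ (valid precisely because $\kappa\in\R\setminus\lcb0\rcb$) gives $\psi=0$ near infinity directly. Both routes are standard in scattering theory and reach the same intermediate conclusion; yours outsources the hard analysis to the classical Rellich lemma rather than reproving it via holomorphic division, at the price of quoting that lemma rather than keeping the proof self-contained as the paper does.
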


We denote by $\B_r$ (resp. $\Sbb_r$) the ball (resp. sphere) of $\R^3$ whose center is $\nulvec$ and radius is $r>0$. The surface measure on $\Sbb_r$ is denoted $\sigma_r$. 

\begin{proof} 
    One proceeds by contradiction : assume that $(\lambda,\kappa)\in\Mpaz$ is a resonance of $H$ with $\kappa\in\R\setminus\lcb 0\rcb$. Thanks to Theorem \ref{PotentialResolventContinued} one can pick $\phi\in\Hfrak_\comp$ such that $\psi:=\Rpaz_0(\lambda,\kappa)\phi$ is not identically zero and solves the equation 
    \[(\Dvec+V)\psi=\lambda\psi.\]

    \textit{Step 1. We prove that $(\lambda I_4+\beta+\alpha\cdot\xi)\Fcal\phi(\xi)=0$ when $\lv\xi\rv^2=\kappa^2$.} 
    
    Let $r_0>0$ such that $\Supp\phi\subset\B_{r_0}$ and let $r>r_0$. Then  $\psi$ is smooth on $\R^3\setminus\B_{r_0}$ (see Proposition \ref{FreeResolventAsymptotics}). Using $V^*=V$, the fact that $\lambda\in\R$ and an integration by parts, one gets
    \begin{align*}
        \lambda\int_{\B_r}\lv\psi(\xvec)\rv^2d\xvec&=\int_{\B_r}\psi(\xvec)^*(\Dvec+V)\psi(\xvec)d\xvec\\
        &=\int_{\B_r}\lp(\Dvec+V)\psi(\xvec)\rp^*\psi(\xvec)d\xvec+\int_{\Sbb_r}\psi(\svec)^*\lp-\imath\alpha\cdot\frac{\svec}{r}\rp\psi(\svec)d\sigma_r(\svec)
        \\
        &=\lambda\int_{\B_r}\lv\psi(\xvec)\rv^2d\xvec-\imath r^2\int_{\Sbb_1}\psi(r\svec)^*(\alpha\cdot\svec)\psi(r\svec)d\sigma_1(\svec)
    \end{align*}
    and hence
    \begin{equation}\label{boundary_term_zero}
        \int_{\Sbb_1}\psi(r\svec)^*(\alpha\cdot\svec)\psi(r\svec)d\sigma_1(\svec)=0.
    \end{equation}
    For $\svec\in\Sbb_1$, as $\lambda^2-\kappa^2=1$ one has
    \[(\lambda I_4+\beta+\kappa\alpha\cdot\svec)^*(\alpha\cdot\svec)(\lambda I_4+\beta+\kappa\alpha\cdot\svec)=\frac{\kappa}{\lambda}(\lambda I_4+\beta+\kappa\alpha\cdot\svec)^*(\lambda I_4+\beta+\kappa\alpha\cdot\svec)\]
    so Proposition \ref{FreeResolventAsymptotics} yields, as $r\rightarrow+\infty$ 
    \begin{equation}\label{asymptotic_boundary_term}
        \psi(r\svec)^*(\alpha\cdot\svec)\psi(r\svec)=\frac{1}{16\pi^2 r^2}\lp \frac{\kappa}{\lambda}\lv\lp\lambda+\beta+\kappa\alpha\cdot\svec\rp\Fcal\phi\lp\kappa\svec\rp\rv^2+O\lp\frac{1}{r}\rp\rp.
    \end{equation}
    Combining \eqref{boundary_term_zero} and \eqref{asymptotic_boundary_term} and taking the limit $r\rightarrow+\infty$, one obtains
    \[\int_{\Sbb_1}\lv(\lambda I_4+\beta+\kappa\alpha\cdot\svec)\Fcal\phi(\kappa\svec)\rv^2d\sigma_1(\svec)=0\]
    and the conclusion of step 1 follows.
    
    \bigskip
    
    Since Step $1$ holds, we will be able to use the same arguments as for Schrödinger operators (see \cite[Theorem 3.33]{DyZw19}).
    
    \textit{Step 2. We check that $\Rpaz_0(\lambda,\kappa)\phi$ is compactly supported.}
    
    One has $(\Dvec-\lambda I_4)(\Rpaz_0(\lambda,\kappa)\phi)=\phi$ and then in $\Scal'(\R^3)^4$ (as $\lambda^2-\kappa^2=1$)
    \[(-\Delta-\kappa^2I_4)(\Rpaz_0(\lambda,\kappa)\phi)=(\Dvec+\lambda)\phi.\]
    Let $E:\C^3\rightarrow\C^4$ be the entire continuation of $ \Fcal\lp(\Dvec+\lambda)\phi\rp$. For $\xi\in\C^3$ we denote 
    \[\xi^2:=\xi_1^2+\xi_2^2+\xi_3^2\in\C.\]
    Consider the analytic surface                        
    \[\pazocal{A}:=\lcb\xi\in\C^3\tq\xi^2=\kappa^2\rcb.\]
    Thanks to \cite[Theorem 7.3.2]{Ho03}, if we prove that 
    \[\xi\in\C^3\setminus\pazocal{A}\mapsto\frac{E(\xi)}{\xi^2-\kappa^2}\in\C^4\]
    can be continued in an holomorphic function from $\C^3$ to $\C^4$ then $\Rpaz_0(\lambda,\kappa)\phi$ is compactly supported. Even if this result is implicitaly used in previous works on Schrödinger operators, let us give a precise proof.
    
    Let $\xi_0\in\pazocal{A}$. Since $\kappa\not=0$, one of the components of $\xi_0$ is not $\pm\kappa$. Permuting the components if necessary, we can assume that there exists $\theta_0,\varphi_0\in\C$ such that 
    \[\xi_0=\lp \lv\kappa\rv\cos\theta_0\cos\varphi_0, \lv\kappa\rv\sin\theta_0\cos\varphi_0, \lv\kappa\rv\sin\varphi_0 \rp.\]
    Consider the holomorphic function
    \[\Gamma:(r,\theta,\varphi)\in\C^3\mapsto\lp r\cos\theta\cos\varphi , r\sin\theta\cos\varphi , r\sin\varphi\rp\in\C^3.\]
    Then $\Gamma(\lv\kappa\rv,\theta_0,\varphi_0)=\xi_0$ and the jacobian of $\Gamma$ is equal to $\lv\kappa\rv$ at $(\lv\kappa\rv,\theta_0,\varphi_0)$. The holomorphic local inversion Theorem implies that there is an open set $O$ of $\C^3$ centered in $\xi_0$ such that 
    \[(\lv\kappa\rv,\theta_0,\varphi_0)\in O \subset\lcb(r,\theta,\varphi)\in\C^3\tq \Re r>0\rcb\]
    and $\Gamma$ is biholomorphic from $O$ to $\Gamma(O)$. We denote its inverse by
    \[\xi\in\Gamma(O) \mapsto \lp R(\xi) , \Theta(\xi) , \Phi(\xi) \rp\in O.\]
    For $\xi\in\Gamma(O)$ we have $\xi^2=R(\xi)^2$ so $\xi^2\not\in]-\infty,0]$ since $\Re R(\xi)>0$. We can finally write
    \[E(\xi)=h(\xi^2-\kappa^2,\Theta(\xi),\Phi(\xi)) \mbox{ for } \xi\in\Gamma(O)\]
    where 
    \[h(r,\theta,\varphi)=E\lp\Gamma\lp\sqrt{r+\kappa^2},\theta,\varphi\rp\rp \mbox{ for } (r,\theta,\varphi)\in\C^3 \mbox{ with } r\not\in]-\infty,-\kappa^2]\]
    and where $\sqrt{\cdot}$ is the holomorphic branch of the square root defined on $\C\setminus]-\infty,0]$. We see that $h(0,\theta,\varphi)=0$ for all $\theta,\varphi\in \R$ by Step 1 and then for all $\theta,\varphi\in\C$ by the isolated zeros principle applied separately in $\theta$ and $\varphi$. 
    
    For $\xi\in\Gamma(O)$ we get
    \begin{align*}
        E(\xi)&=h\lp\xi^2-\kappa^2,\Theta(\xi),\Phi(\xi)\rp-h\lp 0,\Theta(\xi),\Phi(\xi)\rp\\
        &=\lp\xi^2-\kappa^2\rp\int_0^1\frac{\partial h}{\partial r} \lp t\lp\xi^2-\kappa^2\rp , \Theta(\xi) , \Phi(\xi)\rp dt.
    \end{align*}
    Since the integral term is clearly holomorphic in $\Gamma(O)$, this provides a holomorphic continuation near $\xi_0$ of 
    \[\xi\mapsto\frac{E(\xi)}{\xi^2-\kappa^2}.\]
    Eventually, we deduce that $\Rpaz_0(\lambda,\kappa)\phi$ is compactly supported.
    
    \textit{Step 3. Contradiction.} 
    
    We have just shown in Step 2 that $\psi$ is equal to zero in a neighbourhood of $\infty$. We will now show that $\psi$ is identically zero, which will be our contradiction. For this, we use a kind of Carleman estimate for the Dirac operator : thanks to \cite[Lemma 1]{BeGe87}, for all  real valued function $\varphi\in\Ccal^2(]0,+\infty[)$ and all $u\in\Dfrak$ which support is a compact set of $\R^3\setminus\lcb\nulvec\rcb$
    \[\frac{3}{2}\int_{\R^3\setminus\lcb\nulvec\rcb}e^{2\varphi(\lv\xvec\rv)}\lv(\alpha\cdot D)u(\xvec)\rv ^2d\xvec\geq\int_{\R^3\setminus\lcb\nulvec\rcb}e^{2\varphi(\lv\xvec\rv)}\lp\varphi''(\lv\xvec\rv)+\frac{\varphi'(\lv\xvec\rv)}{\lv\xvec\rv}-\frac{3}{2\lv\xvec\rv^2}\rp\lv u(\xvec)\rv ^2d\xvec.\]
    We set 
    \[\varphi_h(r):=\frac{1}{h}\lp\frac{3}{4}(\log r)^2+\log r\rp \mbox{ for } r,h>0\]
    and apply the Carleman estimate with $u:=\psi(\cdot+\xvec_0)$ where $\xvec_0\not\in\Supp\psi$ is fixed. Then 
    \[\int_{\R^3\setminus\lcb\nulvec\rcb}e^{2\varphi_h(\lv\xvec\rv)}\lv(\lambda I_4-\beta-V(\xvec+\xvec_0))u(\xvec)\rv^2d\xvec\geq\lp\frac{1}{h}-1\rp\int_{\R^3\setminus\lcb\nulvec\rcb}\frac{e^{2\varphi_h(\lv\xvec\rv)}}{\lv\xvec\rv^2}\lv u(\xvec)\rv ^2d\xvec.\]
    Since $\Supp u$ is a compact of $\R^3\setminus\lcb\nulvec\rcb$ one deduces that 
    \[C\int_{\R^3\setminus\lcb\nulvec\rcb}\lv e^{\varphi_h(\lv\xvec\rv)} u(\xvec)\rv ^2d\xvec\geq\lp\frac{1}{h}-1\rp\int_{\R^3\setminus\lcb\nulvec\rcb}\lv e^{\varphi_h(\lv\xvec\rv)}u(\xvec)\rv ^2d\xvec\]
    where $C>0$ does not depend on $h$. By taking $h$ small enough we get
    \[0\geq\int_{\R^3\setminus\lcb\nulvec\rcb}\lv e^{\varphi_h(\lv\xvec\rv)}u(\xvec)\rv ^2d\xvec.\]
    Eventually, $u$ is identically zero and so does $\psi$ : this is a contradiction.  
\end{proof}

\subsection{Resonant states}\label{PotentialResonantState} As explained in Remark \ref{RemarkResonantState}, this part of the section is dedicated to the study of the functions that characterize the existence of a resonance. In particular, we will prove that their set is a finite dimension subspace of $\Dfrak_\loc$. For the whole subsection, we consider $\pfrak\in\Res(H)$ and $\Pi_\pfrak$ the residue at $\pfrak$ of $\Rpaz$ in $(\Vpaz_{\pfrak},\varphi_\pfrak)$. We write $\pfrak=(\lambda,\kappa)$ and assume that $\kappa\not=0$.

\begin{definition} We call
    \begin{itemize}
        \item generalized resonant state associated with $\pfrak$ any function $\psi\in\Ran(\Pi_\pfrak)\setminus\lcb0\rcb$
        \item resonant state associated with $\pfrak$ any function $\psi\in\Ran(\Pi_\pfrak)\setminus\lcb0\rcb$ such that \[(\Dvec+V)\psi=\lambda\psi\]
        \item multiplicity of $\pfrak$ the rank of $\Pi_\pfrak$.
    \end{itemize}
\end{definition}

\begin{proposition}\label{ExplicitLaurentPotential}
    The space $\Ran(\Pi_\pfrak)$ is left invariant by $\Dvec+V$ and
    \[N:\psi\in\Ran(\Pi_\pfrak)\mapsto(\Dvec+V-\lambda)\psi\in\Ran(\Pi_\pfrak)\]
    is nilpotent. Let $L\geq 1$ be the index of $N$. Then
    \[\zeta\mapsto \Rpaz(\varphi_{\pfrak}^{-1}(\zeta))-\sum_{\ell=1}^L\frac{N^{\ell-1}\Pi_\pfrak}{(\zeta-\lambda)^\ell}\]
    has a holomorphic continuation from an open neighbourhood of $\lambda$ to $\Lcal(\Hfrak_\comp,\Dfrak_\loc)$.
\end{proposition}

\begin{remark}
    This result is very similar to the description of the Laurent expansion of the resolvent near an eigenvalue of finite algebraic multiplicity of a general closed operator (see \cite[p. 180-181]{Ka95}).
\end{remark}

\begin{proof}
    Let $T_1=\Pi_\pfrak,T_2,...,T_L\in\Lcal(\Hfrak_\comp,\Dfrak_\loc)$ of finite rank with $T_L\not=0$ such that 
    \[\zeta\mapsto \Rpaz(\varphi_{\pfrak}^{-1}(\zeta))-\sum_{\ell=1}^L\frac{T_\ell}{(\zeta-\lambda)^\ell}\]
    has an holomorphic continuation from an open neighbourhood of $\lambda$ to $\Lcal(\Hfrak_\comp,\Dfrak_\loc)$.
    
    Let $\psi\in\Hfrak_\comp\cap\Dfrak$. Pick $\chi\in\Ccal$ such that $\psi\prec\chi$.  
    For all $(z,\omega)\in\Mpaz\setminus\Zpaz$ 
    \begin{equation}\label{ContinuedResolventEquation}
        \chi \Rpaz(z,\omega)\chi(H-z)\psi=\psi.
    \end{equation}
    Indeed, \eqref{ContinuedResolventEquation} is obviously true if $\Im\omega>0$ and remains true if $\Im\omega\leq 0$ by analytic continuation. Replacing $(z,\omega)$ by $\varphi_\pfrak^{-1}(\zeta)$ in \eqref{ContinuedResolventEquation} and using the uniqueness of the Laurent expansion at $\lambda$, one gets if $\ell<L$
    \[-\chi T_\ell\chi(H-\lambda)\psi+\chi T_{\ell+1}\chi\psi=0\]
    that is 
    \[\chi T_\ell(H-\lambda)\psi=\chi T_{\ell+1}\psi.\]
    Again, $\chi$ being arbitrary, we get
    \[T_\ell(H-\lambda)\psi=T_{\ell+1}\psi.\]
    We deduce that if $l<L$ then
    \[T_{\ell+1}(\Hfrak_\comp\cap\Dfrak)\subset\Ran(T_\ell).\] 
    For the Fréchet topology of $\Dfrak_\loc$, the space $T_{\ell+1}(\Hfrak_\comp\cap\Dfrak)$ is dense in $\Ran(T_{\ell+1})$, but $T_{\ell+1}(\Hfrak_\comp\cap\Dfrak)$ and $\Ran(T_\ell)$ are closed since they are of finite dimensions, hence
    \begin{equation}\label{IncludedRanges}
        \Ran(T_{\ell+1})\subset\Ran(T_\ell).
    \end{equation}
    In a similar fashion one can show that 
    \begin{equation}\label{ExactSequence}
        (\Dvec+V-\lambda)T_\ell=\left\{
        \begin{array}{ll}
            T_{\ell+1} &\mbox{ if } \ell<L \\
            0 &\mbox{ if } \ell=L
        \end{array}.
        \right.
    \end{equation}
    Thanks to \eqref{IncludedRanges} and \eqref{ExactSequence}, one immediately gets that $\Ran(\Pi_\pfrak)$ is left invariant by $\Dvec+V$, $N^{\ell-1}\Pi_\pfrak=T_\ell$ and $N^L=0$.
\end{proof}

\begin{corollary}\label{ResonantIOIOutgoing}
    Let $\psi\in\Dfrak_\loc$. Then $\psi$ is a resonant state associated with $\pfrak$ if and only if $\psi\not=0$, $(\Dvec+V)\psi=\lambda\psi$ and there exists $\phi\in\Hfrak_\comp$ such that $\psi=\Rpaz_0(\lambda,\kappa)\phi$ in a neighbourhood of $\infty$.
\end{corollary}

\begin{proof}
    Assume that $\psi\in\Ran(\Pi_\pfrak)$, $\psi\not=0$ and $(\Dvec+V)\psi=\lambda\psi$. Let $\rho\in\Ccal$ such that $V\prec\rho$. For $z\not\in\Sp(H)$
    \begin{align*}
        (H_0-z)(1-\rho)(H-z)^{-1}&=(H-z)(1-\rho)(H-z)^{-1}\\
        &=-\alpha\cdot(D\rho)(H-z)^{-1}+1-\rho
    \end{align*}
    and then
    \[(1-\rho)\lp H-z\rp^{-1}=-\lp H_0-z\rp^{-1}\alpha\cdot(D\rho)\lp H-z\rp^{-1}+(H_0-z)^{-1}(1-\rho).\]
    By analytic continuation, for all $\chi,\tilde{\chi}\in\Ccal$ and $\mfrak\in\Mpaz\setminus\Zpaz$
    \begin{equation}\label{ResonantToOutgoingResolventPotential}
        (1-\rho)\chi\Rpaz(\mfrak)\tilde{\chi}=-\chi \Rpaz_0(\mfrak)\alpha\cdot(D\rho)\Rpaz(\mfrak)\tilde{\chi}+\chi \Rpaz_0(\mfrak)\tilde{\chi}(1-\rho)
    \end{equation}
    With the notations of Theorem \ref{HolomorphyR0} and Proposition \ref{ExplicitLaurentPotential}, replacing $\mfrak$ by $\varphi_\pfrak^{-1}(\zeta)$ in \eqref{ResonantToOutgoingResolventPotential} and examinating the term of order $-1$ in the Laurent expansion at $\lambda$, one has
    \[(1-\rho)\chi\Pi_\pfrak\tilde{\chi}=\sum_{\ell=0}^{L-1}-\chi A_\ell\alpha\cdot(D\rho)N^\ell\Pi_\pfrak\tilde{\chi}.\]
    Since $\chi$ and $\tilde{\chi}$ are arbitrary we conclude that
    \begin{equation}\label{ResonantToOutgoingResiduePotential}
        (1-\rho)\Pi_\pfrak=-\sum_{\ell=0}^{L-1} A_\ell\alpha\cdot(D\rho)N^\ell\Pi_\pfrak.
    \end{equation}
    Moreover $A_0=\Rpaz_0(\lambda,\kappa)$ and $N^\ell\psi=0$ for $\ell\geq 1$ since $(\Dvec+V)\psi=\lambda\psi$. We conclude that
    \[(1-\rho)\psi=\Rpaz_0(\lambda,\kappa)\lp-\alpha\cdot(D\rho)\psi\rp\]
    With $\phi:=-\alpha\cdot(D\rho)\psi\in\Hfrak_\comp$ we have on $\R^3\setminus\Supp\rho$
    \[\psi=\Rpaz_0(\lambda,\kappa)\phi.\]

    Conversely, assume that $\psi\not=0$, $(\Dvec+V)\psi=\lambda\psi$ and $\psi=\Rpaz_0(\lambda,\kappa)\phi$ on $\R^3\setminus\B_{r_0}$ for some $\phi\in\Hfrak_\comp$ and $r_0>0$. Let $\rho\in\Ccal$ such that $\indic_{\B_{r_0}},\phi\prec\rho$. One has 
    \begin{equation}\label{PsiDecomposition}
        \psi=\rho\psi+(1-\rho)\Rpaz_0(\lambda,\kappa)\phi.
    \end{equation}
    First for $z\not\in\Sp(H)$
    \begin{align*}
        \chi\rho\psi&=\chi(H-z)^{-1}(H-z)\rho\psi\\
        &=\chi(H-z)^{-1}\alpha\cdot(D\rho)\psi+\chi(H-z)^{-1}\rho(\Dvec+V-z)\psi\\
        &=\chi(H-z)^{-1}\alpha\cdot(D\rho)\Rpaz_0(\lambda,\kappa)\phi+(\lambda-z)\chi(H-z)^{-1}\rho\psi
    \end{align*}
    By analytic continuation, for all $\chi\in\Ccal$ and $(z,\omega)\in\Mpaz\setminus\Zpaz$
    \begin{equation}\label{OutgoingToResonantResolventPotential1}
        \chi\rho\psi=\chi\Rpaz(z,\omega)\alpha\cdot(D\rho)\Rpaz_0(\lambda,\kappa)\phi-(z-\lambda)\chi\Rpaz(z,\omega)\rho\psi.
    \end{equation}
    Reversing $\Rpaz_0$ and $\Rpaz$ in the derivation from \eqref{ResonantToOutgoingResolventPotential} and using that $(1-\rho)\phi=0$ we get for all $\chi\in\Ccal$ and $\mfrak\in\Mpaz\setminus\Zpaz$
    \begin{align*}
        (1-\rho)\chi\Rpaz_0(\mfrak)\phi&=-\chi \Rpaz(\mfrak)\alpha\cdot(D\rho)\Rpaz_0(\mfrak)\phi\\
        &=-\chi \Rpaz(\mfrak)\alpha\cdot(D\rho)\lp\Rpaz_0(\mfrak)-\Rpaz_0(\pfrak)\rp\phi-\chi \Rpaz(\mfrak)\alpha\cdot(D\rho)\Rpaz_0(\pfrak)\phi.
    \end{align*}
    With \eqref{OutgoingToResonantResolventPotential1}, this provides for all $\chi\in\Ccal$ and all $\zeta$ in a punctured neighbourhood of $\lambda$
    \begin{align*}
        \chi\rho\psi+(1-\rho)\chi\Rpaz_0(\varphi_\pfrak^{-1}(\zeta))\phi&=-\chi \Rpaz(\varphi_\pfrak^{-1}(\zeta))\alpha\cdot(D\rho)\lp\Rpaz_0(\varphi_\pfrak^{-1}(\zeta))-\Rpaz_0(\varphi_\pfrak^{-1}(\lambda))\rp\phi\\
        &-(\zeta-\lambda)\chi\Rpaz(\varphi_\pfrak^{-1}(\zeta))\rho\psi.
    \end{align*}
    Examinating the term of order $0$ in the Laurent expansion at $\lambda$, with \eqref{PsiDecomposition} one has
    \[\chi\psi=\sum_{\ell=1}^L-\chi N^{\ell-1}\Pi_{\pfrak}\alpha\cdot(D\rho)A_\ell\phi-\chi\Pi_{\pfrak}\rho\psi.\]
    Since $\chi$ is arbitrary, we get
    \[\psi=-\sum_{\ell=1}^L N^{\ell-1}\Pi_{\pfrak}\alpha\cdot(D\rho)A_\ell\phi-\Pi_{\pfrak}\rho\psi\in\Ran(\Pi_\pfrak).\]
\end{proof}

To conclude, we wish to show that the resonant information does not depend on the local chart chosen. 

\begin{corollary}\label{ResonantInformationIndependent}
    Let $(\Vpaz,\varphi)\in\Acal$ such that $\pfrak\in\Vpaz$ and $\Pi$ be the residue at $\pfrak$ of $\Rpaz$ in $(\Vpaz,\varphi)$. Then $\Ran(\Pi)=\Ran(\Pi_\pfrak)$.
\end{corollary}

\begin{proof}
    Set $\Phi:=\varphi\circ\varphi_\pfrak^{-1}$. Recall that $\Phi$ is a biholomorphism. Pick $r>0$ small enough in such a way that the paths  
    \[\gamma:t\in[0,2\pi]\mapsto\varphi(\pfrak)+re^{it}\in\varphi(\Vpaz) \mbox{ and } \Gamma:=\Phi^{-1}\circ\gamma.\]
    encloses only $\varphi(\pfrak)$ and $\lambda$. Let $\chi\in\Ccal$. Then a change of variable and Proposition \ref{ExplicitLaurentPotential} yield
    \begin{align*}
        \chi\Pi\chi&=\int_{\gamma}\chi \Rpaz(\varphi^{-1}(\zeta))\chi d\zeta\\
        &=\int_{\Gamma}\Phi'(\zeta)\chi \Rpaz(\varphi_\pfrak^{-1}(\zeta))\chi d\zeta\\
        &=\Ind_{\Gamma}(\lambda)\sum_{\ell=1}^L\frac{\Phi^{(\ell)}(\lambda)}{(\ell-1)!}\chi N^{\ell-1}\Pi_{\pfrak}\chi
    \end{align*}
    where $\Ind_{\Gamma}(\lambda)$ is the index of $\Gamma$ around $\lambda$.
    Since $\chi$ is arbitrary, one obtains
    \[\Pi=\sum_{\ell=1}^Lc_\ell N^{\ell-1}\Pi_{\pfrak} \mbox{ with } c_\ell=\Ind_{\Gamma}(\lambda)\frac{\Phi^{(\ell)}(\lambda)}{(\ell-1)!}.\]
    In particular $\Ran(\Pi)\subset\Ran(\Pi_\pfrak)$. Since $c_1\not=0$ we can write 
    \[\Pi=c_1(1+\Tilde{N})\Pi_{\pfrak} \mbox{ with } \Tilde{N}=\frac{1}{c_1}\sum_{\ell=2}^Lc_\ell N^{\ell-1}.\]
    Since $\Tilde{N}$ is nilpotent we deduce that $c_1(1+\Tilde{N})$ is an invertible endomorphism of $\Ran(\Pi_\pfrak)$. Therefore $\Pi$ and $\Pi_\pfrak$ have same rank. We conclude that $\Ran(\Pi)=\Ran(\Pi_\pfrak)$.
\end{proof}

\section{Obstacle scattering}\label{ObstacleScattering}
In this section, we consider the context $\ref{PotentialContext}$. The purpose is to obtain similar results as in the previous section. Again, the ideas and techniques used are those of Sjöstrand and Zworski for the abstract \textit{black box frame}. We adapt them to our more concrete Dirac operator.

\bigskip

We recall that we now have $\Hfrak=\leb^2(\Omega)^4$, $\Ccal=\Ccal^\infty_c(\overline{\Omega})$ and  
\[\Dfrak:=\lcb\psi\in\sob^1(\Omega)^4\tq (-\imath\beta\alpha\cdot\nvec)\psi_{\tq\partial\Omega}=\psi_{\tq\partial\Omega}\rcb.\]

\subsection{Construction of the resonances of $H$}
We wish to prove Theorem \ref{ResolventContinued} in the context $\ref{ObstacleContext}$. Let us first state it again.

\begin{theorem}\label{ObstacleResolventContinued}
    There exists a finite meromorphic function from $\Mpaz$ to $\Lcal(\Hfrak_\comp,\Dfrak_\loc)$ denoted $(\Zpaz,\Rpaz)$ such that for all $(z,\omega)\not\in\Zpaz$ with $\Im\omega>0$ one has $z\not\in\Sp(H)$ and 
    \[\Rpaz(z,\omega)=(H-z)^{-1} \mbox{ on } \Hfrak_\comp.\]
\end{theorem}

\begin{definition}
    A pole of $\Rpaz$ is called a resonance of $H$. The set of the resonances of $H$ is denoted $\Res(H)$.
\end{definition}

\begin{remark}
    Theorem \ref{ObstacleResolventContinued} does not provide a characterization of $\Res(H)$ through a generalized eigenvalue problem as in Theorem \ref{PotentialResolventContinued}. Nevertheless, a result in that sense is given in Lemma \ref{ResonantStateExistence}.
\end{remark}

\begin{proof} 
    Let $\rho_0\in\Ccal^\infty_c(\R^3)$ such that $\indic_{\R^3\setminus\Omega}\prec\rho_0$ and $z\not\in\Sp(H)$. The operator $r_\Omega(1-\rho_0)$ maps $\sob^1(\R^3)^4$ into $\Dfrak$ and
    \[H r_\Omega(1-\rho_0)=r_\Omega H_0(1-\rho_0).\]
    Then, by composition with $(H_0-z)^{-1}$, one has
    \[(H-z)L_0(z)=1-\rho_{0\tq\Omega}+K_0(z)\]
    where 
    \[L_0(z):=r_\Omega(1-\rho_0)(H_0-z)^{-1}e_\Omega \mbox{ and } K_0(z):=-r_\Omega\alpha\cdot(D\rho_0)(H_0-z)^{-1}e_\Omega.\] 
    Now, we take $z_0\not\in\Sp(H)$, $\rho_1\in\Ccal^\infty_c(\R^3)$ such that $\rho_0\prec\rho_1$ and we set
    \[L_1:=\rho_{1\tq\Omega}(H-z_0)^{-1}\rho_{0\tq\Omega}\]
    in such a way that
    \[(H-z)L_1=\rho_{0\tq\Omega}+K_1(z)\]
    with 
    \[K_1(z):=\lp\alpha\cdot(D\rho_{1\tq\Omega})+(z_0-z)\rho_{1\tq\Omega}\rp(H-z_0)^{-1}\rho_{0\tq\Omega}.\]
    Setting 
    \[L(z):=L_0(z)+L_1 \mbox{ and } K(z):=K_0(z)+K_1(z)\]
    we eventually get
    \begin{equation}\label{intuition_prol_mit}
        (H-z)L(z)=1+K(z).
    \end{equation}
    Let $\rho_2\in\Ccal^\infty_c(\R^3)$ such that $\rho_1\prec\rho_2$. Then
    \[1+K(z)=\lp 1+K(z)(1-\rho_{2\tq\Omega})\rp\lp 1+K(z)\rho_{2\tq\Omega}\rp\]
    and both operators 
    \[1+K(z)(1-\rho_{2\tq\Omega}) \mbox{ , } 1-K(z)(1-\rho_{2\tq\Omega})\]
    are inverse of one another. When $1+K(z)\rho_{2\tq\Omega}$ is invertible, we eventually get
    \[(H-z)^{-1}=L(z)\lp 1+K(z)\rho_{2\tq\Omega}\rp^{-1}\lp 1-K(z)(1-\rho_{2\tq\Omega})\rp.\]
    Then it is natural to define, for $(z,\omega)\in\Mpaz$
    \[\tilde{L}(z,\omega):=r_\Omega(1-\rho_0)\Rpaz_0(z,\omega)e_\Omega+L_1 \mbox{ , } \tilde{K}(z,\omega):=-r_\Omega\alpha\cdot(D\rho_0)\Rpaz_0(z,\omega)e_\Omega+K_1(z)\]
    and for 
    \[(z,\omega)\not\in\Zpaz:=\lcb\mfrak\in\Mpaz\tq 1+\tilde{K}(\mfrak)\rho_{2\tq\Omega}\mbox{ is not invertible}\rcb,\]
    \[\Rpaz(z,\omega):=\Tilde{L}(z,\omega)\lp 1+\tilde{K}(z,\omega)\rho_{2\tq\Omega}\rp^{-1}\lp1-\tilde{K}(z,\omega)(1-\rho_{2\tq\Omega})\rp.\]

    One recognizes a very similar structure in the definition of $\Rpaz$ as in the previous section (see Equation \eqref{def_R_V}). Specificly $\tilde{L}$ and $\mfrak\mapsto1-\tilde{K}(\mfrak)(1-\rho_{2\tq\Omega})$ 
    have the same analytical and functional properties as $\Rpaz_0$ and $\mfrak\mapsto1-V\Rpaz_0(\mfrak)(1-\rho)$. Consequently the elements of proof of the finite meromorphy of $(\Zpaz,\Rpaz)$ are extremely similar to those of Theorem \ref{PotentialResolventContinued}. We only detail the application of the Fredholm analytic Theorem. 

    Let $\mfrak\in\Mpaz$. Since $(H-z_0)^{-1}$ is bounded from $\Hfrak$ to $\Dfrak$ (see \eqref{EquivalentNormMIT}) then $\tilde{K}(\mfrak)\rho_{2\tq\Omega}$ is compact on $\Hfrak$ thanks to Rellich-Kondrachov Theorem. Let $z\not\in(-\infty,-1]\cup[1,+\infty)$. Then 
    \begin{align*}
        \lV \tilde{K}\lp z,(z^2-1)^{1/2}\rp\rho_{2\tq\Omega}\rV_{\Lcal(\Hfrak)}&\leq C\lp \lV (H_0-z)^{-1}\rV_{\Lcal\lp\leb^2(\R^3)^4\rp}+\lV (H-z)^{-1}\rV_{\Lcal(\Hfrak)}\rp\\
        &\leq \frac{C}{\lv\Im z\rv}
    \end{align*}
    where $C>0$ does not depend on $z$. Choosing $\lv\Im z\rv$ large enough, the Neumann invertibility criterion ensures that $(z,(z^2-1)^{1/2})\in\Mpaz\setminus\Zpaz$. Eventually, one can apply the Fredholm analytic Theorem $\mfrak\mapsto\tilde{K}(\mfrak)\rho_{2\tq\Omega}$.
\end{proof}

\subsection{Absence of resonances of $H$}
This part of the section is dedicated to the proof of Theorem \ref{Rellich} in the context \ref{ObstacleContext}. In order to use a similar strategy as in the proof of Theorem \ref{RellichPotential}, the following Lemma is needed.

\begin{lemma}\label{ResonantStateExistence}
    Let $(\lambda,\kappa)\in\Res(H)$. There exists $\psi\in\Dfrak_\loc$, $\psi\not=0$, such that 
    \begin{enumerate}
        \item $\Dvec\psi=\lambda\psi$
        \item $\psi=\Rpaz_0(\lambda,\kappa)\phi$ in a neighbourhood of $\infty$ for some $\phi\in\leb^2_\comp(\R^3)^4$.
    \end{enumerate}
\end{lemma}

\begin{proof}
    Pick $(\Vpaz,\varphi)\in\Acal$ such that $(\lambda,\kappa)\in\Vpaz$ and $T_1,...,T_L\in\Lcal(\Hfrak_\comp,\Dfrak_\loc)$ with $T_L\not=0$ such that 
    \[\zeta\mapsto \Rpaz(\varphi^{-1}(\zeta))-\sum_{\ell=1}^L\frac{T_\ell}{(\zeta-\zeta_0)^\ell}\]
    can be continued in an holomorphic function from an open neighbourhood of $\zeta_0:=\varphi(\lambda,\kappa)$ to $\Lcal(\Hfrak_\comp,\Dfrak_\loc)$. Let $\chi,\tilde{\chi}\in\Ccal$. By analytic continuation, for all $(z,\omega)\in\Mpaz\setminus\Zpaz$ 
    \[(H-z)\chi \Rpaz(z,\omega)\tilde{\chi}=\alpha\cdot(D\chi)\Rpaz(z,\omega)\tilde{\chi}+\chi\tilde{\chi}.\]
    Let $z$ be the first component of $\varphi^{-1}$. Recall $z$ is holomorphic from $\varphi(\Vpaz)$ to $\C$ and $z(\zeta_0)=\lambda$. For $\zeta$ in a punctured neighbourhood of $\zeta_0$ one has
    \begin{equation}\label{ContinuedResolventEquationObstacle}
        (H-z(\zeta))\chi \Rpaz(\varphi^{-1}(\zeta))\tilde{\chi}=\alpha\cdot(D\chi)\Rpaz(\varphi^{-1}(\zeta))\tilde{\chi}+\chi\tilde{\chi}
    \end{equation}
    so examinating the terms of order $-L$ in the Laurent expansions from \eqref{ContinuedResolventEquationObstacle} at $\zeta_0$ yields
    \[(H-\lambda)\chi T_L\tilde{\chi}=\alpha\cdot(D\chi)T_L\tilde{\chi}\]
    that is 
    \[\chi (\Dvec-\lambda)T_L\tilde{\chi}=0.\]
    Since $\chi,\tilde{\chi}$ are arbitrary we conclude that
    \begin{equation}\label{EigenvalueEquation}
        (\Dvec-\lambda)T_L=0.
    \end{equation}
    Now let $\rho\in\Ccal$ which is equal to $1$ near $\partial\Omega$. For $z\not\in\Sp(H)$
    \begin{align*}
        (H_0-z)e_\Omega(1-\rho)(H-z)^{-1}&=e_\Omega(\Dvec-z)(1-\rho)(H-z)^{-1}\\
        &=-e_\Omega\alpha\cdot(D\rho)(H-z)^{-1}+e_\Omega(1-\rho)
    \end{align*}
    and then
    \[e_\Omega(1-\rho)\lp H-z\rp^{-1}=-\lp H_0-z\rp^{-1}e_\Omega\alpha\cdot(D\rho)\lp H-z\rp^{-1}+(H_0-z)^{-1}e_\Omega(1-\rho).\]
    By analytic continuation, for all $\chi,\tilde{\chi}\in\Ccal^\infty_c(\R^3)$ and $\mfrak\not\in\Zpaz$
    \begin{equation}\label{ResonantToOutgoingResolventObstacle}
        e_\Omega(1-\rho)\chi_{\tq\Omega} \Rpaz(\mfrak)\tilde{\chi}_{\tq\Omega}=-\chi \Rpaz_0(\mfrak)e_\Omega\alpha\cdot(D\rho)\Rpaz(\mfrak)\tilde{\chi}_{\tq\Omega}+\chi \Rpaz_0(\mfrak)\tilde{\chi} e_\Omega(1-\rho).
    \end{equation}
    Again, the term of order $-L$ in the Laurent expansion at $\zeta_0$ yields
    \[e_\Omega(1-\rho)\chi_{\tq\Omega} T_L\tilde{\chi}_{\tq\Omega}=-\chi \Rpaz_0(\lambda,\kappa)e_\Omega\alpha\cdot(D\rho)T_L\tilde{\chi}_{\tq\Omega}.\]
    Since $\chi,\tilde{\chi}$ are arbitrary we conclude that
    \begin{equation}\label{OutgoingEquation}
        e_\Omega(1-\rho)T_L=- \Rpaz_0(\lambda,\kappa)e_\Omega\alpha\cdot(D\rho)T_L.
    \end{equation}
    Pick $\psi\in\Ran(T_L)$ with $\psi\not=0$. Then the first point of the statement is given by \eqref{EigenvalueEquation} whereas the second one is given by \eqref{OutgoingEquation}.
\end{proof}

We can eventually give the proof of 

\begin{theorem}\label{RellichObstacle}
    Let $(\lambda,\kappa)\in\Mpaz$. 
    \begin{itemize}
        \item If $\kappa\in\R\setminus\lcb0\rcb$ and $\Omega$ is connected then $(\lambda,\kappa)$ is not a resonance of $H$.
        \item If $\kappa=0$ then $(\lambda,\kappa)$ is not a resonance of $H$.
    \end{itemize}
\end{theorem}

\begin{proof} 
    One proceeds by contradiction. Let $(\lambda,\kappa)\in\Res(H)$ with $\kappa\in\R$. Consider $\psi$ and $\phi$ as in Proposition \ref{ResonantStateExistence}. Let $r_0>0$ such that $(\R^3\setminus\Omega)\cup\Supp\phi\subset\B_{r_0}$ and $\psi=\Rpaz_0(\lambda,\kappa)\phi$ on $\R^3\setminus\B_{r_0}$. Let $r>r_0$. Then $\psi$ is smooth on $\R^3\setminus\B_{r_0}$. There are two cases to consider. 
    
    \textit{Case $\kappa\in\R\setminus\lcb0\rcb$.}
    Integrating by part and using that $\lambda$ is real, one has
    \begin{align*}
        \lambda\int_{\Omega\cap\B_r}\lv\psi(\xvec)\rv^2d\xvec&=\int_{\Omega\cap\B_r}\psi(\xvec)^*\Dvec\psi(\xvec)d\xvec\\
        &=\int_{\Omega\cap\B_r}\lp\Dvec\psi(\xvec)\rp^*\psi(\xvec)d\xvec+\int_{\Sbb_r}\psi(\svec)^*\lp-\imath\alpha\cdot\frac{\svec}{r}\rp\psi(\svec)d\sigma_r(\svec)
        \\
        &+\int_{\partial\Omega}\psi_{\tq\partial\Omega}(\svec)^*\lp-\imath\alpha\cdot\nvec(\svec)\rp\psi_{\tq\partial\Omega}(\svec)d\sigma(\svec)\\
        &=\lambda\int_{\Omega\cap\B_r}\lv\psi(\xvec)\rv^2d\xvec-\imath r^2\int_{\Sbb_1}\psi(r\svec)^*(\alpha\cdot\svec)\psi(r\svec)d\sigma_1(\svec)\\
        &+\int_{\partial\Omega}\psi_{\tq\partial\Omega}(\svec)^*\lp-\imath\alpha\cdot\nvec(\svec)\rp\psi_{\tq\partial\Omega}(\svec)d\sigma(\svec)
    \end{align*}
    where, to be precise, $\psi_{\tq\partial\Omega}$ denotes the restriction to $\partial\Omega$ of the trace of $\psi_{\tq\Omega\cap\B_{r_0}}\in\sob^1(\Omega\cap\B_{r_0})^4$. But the MIT boundary condition gives
    \begin{align*}
        \psi_{\tq\partial\Omega}^*\lp-\imath\alpha\cdot\nvec\rp\psi_{\tq\partial\Omega}&=\lp B\psi_{\tq\partial\Omega}\rp^*\lp-\imath\alpha\cdot\nvec\rp\psi_{\tq\partial\Omega}\\
        &=\psi_{\tq\partial\Omega}^*(\imath\alpha\cdot\nvec)B\psi_{\tq\partial\Omega}\\
        &=\psi_{\tq\partial\Omega}^*(\imath\alpha\cdot\nvec)\psi_{\tq\partial\Omega}
    \end{align*}
    that is $\psi_{\tq\partial\Omega}^*\lp-\imath\alpha\cdot\nvec\rp\psi_{\tq\partial\Omega}=0$ and eventually 
    \[\int_{\Sbb_1}\psi(r\svec)^*(\alpha\cdot\svec)\psi(r\svec)d\sigma_1(\svec)=0.\]
    From here, one can use step 1 and 2 of the proof of Theorem \ref{RellichPotential} in order to obtain that $\Rpaz_0(\lambda,\kappa)\phi$ has a compact support. Hence, $\psi=0$ in a neighbourhood of $\infty$. Since $\psi\in\sob^1_\loc(\R^3)^4$ and
    \[-\Delta\psi=\kappa^2\psi\]
    we deduce that $\psi\in\sob^2_\loc(\R^3)^4$ and
    \[\lv\Delta\psi(\xvec)\rv=\lv\kappa\rv^2\lv\psi(\xvec)\rv \mbox{ for almost all } \xvec\in\Omega.\]
    Since $\Omega$ is connected, the unique continuation Theorem (see \cite[Theorem XIII.63]{ReSi72}) yields $\psi=0$ : this is a contradiction. 
    
    \textit{Case $\kappa=0$.} First, we develop the square
    \begin{align*}
        \int_{\Omega\cap\B_r}\lv\psi\rv^2&=\int_{\Omega\cap\B_r}\lv\Dvec\psi\rv^2\\
        &=\int_{\Omega\cap\B_r}\lv\psi\rv^2+\int_{\Omega\cap\B_r}\lv(\alpha\cdot D)\psi\rv^2+2\Re\int_{\Omega\cap\B_r}(\beta\psi)^*(\alpha\cdot D)\psi
    \end{align*}
    to get
    \[\int_{\Omega\cap\B_r}\lv(\alpha\cdot D)\psi\rv^2=-2\Re\int_{\Omega\cap\B_r}(\beta\psi)^*(\alpha\cdot D)\psi.\]
    Then an integration by parts and the MIT boundary condition yield 
    \begin{align*}
        \int_{\Omega\cap\B_r}\lp\beta\psi(\xvec)\rp^*(\alpha\cdot D)\psi(\xvec)d\xvec&=-\int_{\Omega\cap\B_r}\lp(\alpha\cdot D)\psi(\xvec)\rp^*\beta\psi(\xvec)d\xvec\\
        &+\int_{\Sbb_r}\psi(\svec)^*\beta\lp-\imath\alpha\cdot\frac{\svec}{r}\rp\psi(\svec)d\sigma_r(\svec)\\
        &+\int_{\partial\Omega}\psi_{\tq\partial\Omega}(\svec)^*\beta\lp-\imath\alpha\cdot\nvec(\svec)\rp\psi_{\tq\partial\Omega}(\svec)d\sigma(\svec)\\
        &=-\int_{\Omega\cap\B_r}\lp(\alpha\cdot D)\psi(\xvec)\rp^*\beta\psi(\xvec)d\xvec\\
        &-ir^2\int_{\Sbb_1}\psi(r\svec)^*\beta(\alpha\cdot\svec)\psi(r\svec)d\sigma_1(\svec)+\int_{\partial\Omega}\lv\psi_{\tq\partial\Omega}(\svec)\rv^2d\sigma(\svec)
    \end{align*}
    We end up with 
    \[\int_{\Omega\cap\B_r}\lv(\alpha\cdot D)\psi(\xvec)\rv^2d\xvec+\int_{\partial\Omega}\lv\psi_{\tq\partial\Omega}(\svec)\rv^2d\sigma(\svec)=ir^2\int_{\Sbb_1}\psi(r\svec)^*\beta(\alpha\cdot\svec)\psi(r\svec)d\sigma_1(\svec).\]
    Applying Proposition \ref{FreeResolventAsymptotics} with $\kappa=0$ and $\lambda=\pm1$ and since
    \[(\pm I_4+\beta)^*\beta(\alpha\cdot\svec)(\pm I_4+\beta)=0\]
    for $\svec\in\Sbb_1$ then as $r\rightarrow+\infty$ 
    \[\psi(r\svec)^*(\beta\alpha\cdot\svec)\psi(r\svec)=O\lp\frac{1}{r^3}\rp.\]
    We end up with
    \begin{equation}\label{asymptotic_alphaD}
        \int_{\Omega\cap\B_r}\lv(\alpha\cdot D)\psi(\xvec)\rv^2d\xvec=-\int_{\partial\Omega}\lv\psi_{\tq\partial\Omega}(\svec)\rv^2d\sigma(\svec)+O\lp\frac{1}{r}\rp
    \end{equation}
    Since the l.h.s from \eqref{asymptotic_alphaD} is non-negative, we get $\psi_{\tq\partial\Omega}=0$. It remains 
    \[\int_{\Omega\cap\B_r}\lv(\alpha\cdot D)\psi(\xvec)\rv^2d\xvec=O\lp\frac{1}{r}\rp\]
    and hence $(\alpha\cdot D)\psi$ is identically zero. Pick $\svec\in\Sbb_1$. Thanks to Proposition \ref{FreeResolventAsymptotics} we have 
    \[(\alpha\cdot D)\psi(r\svec)=\frac{\imath}{4\pi r^2}(\alpha\cdot\svec)(\pm I_4+\beta)\int_{\R^3}\phi+O\lp\frac{1}{r^3}\rp\]
    and hence 
    \begin{equation}\label{coeff_nul_asymptotic}
        (\pm I_4+\beta)\int_{\R^3}\phi=\nulvec
    \end{equation}
    since $(\alpha\cdot D)\psi=0$ and as $\alpha\cdot\svec$ is an invertible matrix. To conclude, we must notice that the l.h.s from \eqref{coeff_nul_asymptotic} is the first coefficient of the asymptotics of Proposition \ref{FreeResolventAsymptotics}. This yields, for all $j\in\llbracket 1,3\rrbracket$
    \[\psi(r\svec),D_j\psi(r\svec)=O\lp\frac{1}{r^2}\rp.\]
    In short $\psi\in\sob^1_0(\Omega)^4$ and $(\alpha\cdot D)\psi=0$. Let $\Psi\in\sob^1(\R^3)^4$ be the continuation of $\psi$ by zero outside $\Omega$. Then $(\alpha\cdot D)\Psi=0$. Finally, for almost all $\xi\in\R^3$, $(\alpha\cdot \xi)\Fcal\Psi(\xi)=0$ and then $\Fcal\Psi(\xi)=0$. Hence $\Psi$ is identically zero and so does $\psi$ : this is a contradiction.
\end{proof}

\subsection{Resonant states}\label{ObstacleResonantState} We wish to give a complement to Lemma \ref{ResonantStateExistence} (as in Subsection \ref{PotentialResonantState}). We consider $\pfrak=(\lambda,\kappa)\in\Res(H)$ and $\Pi_\pfrak$ the residue at $\pfrak$ of $\Rpaz$ in $(\Vpaz_{\pfrak},\varphi_\pfrak)$. Thanks to Theorem \ref{RellichObstacle} one has $\kappa\not=0$. We keep the same definitions as in the potential's case.

\begin{definition} We call
    \begin{itemize}
        \item generalized resonant state associated with $\pfrak$ any function $\psi\in\Ran(\Pi_\pfrak)\setminus\lcb0\rcb$
        \item resonant state associated with $\pfrak$ any function $\psi\in\Ran(\Pi_\pfrak)\setminus\lcb0\rcb$ such that \[\Dvec\psi=\lambda\psi\]
        \item multiplicity of $\pfrak$ the rank of $\Pi_\pfrak$.
    \end{itemize}
\end{definition}

Next Proposition is proven in the same way as Proposition \ref{ExplicitLaurentPotential} (it is based on standard resolvent equations).  

\begin{proposition}\label{ExplicitLaurentObstacle}
    The space $\Ran(\Pi_\pfrak)$ is left invariant by $\Dvec$ and
    \[N:\psi\in\Ran(\Pi_\pfrak)\mapsto(\Dvec-\lambda)\psi\in\Ran(\Pi_\pfrak)\]
    is nilpotent. Let $L\geq 1$ be the index of $N$. Then
    \[\zeta\mapsto \Rpaz(\varphi_{\pfrak}^{-1}(\zeta))-\sum_{\ell=1}^L\frac{N^{\ell-1}\Pi_\pfrak}{(\zeta-\lambda)^\ell}\]
    has a holomorphic continuation from an open neighbourhood of $\lambda$ to $\Lcal(\Hfrak_\comp,\Dfrak_\loc)$.
\end{proposition}

In particuliar, Corollary \ref{ResonantInformationIndependent} still holds for the obstacle's case. Then we deduce the following Corollary.

\begin{corollary}\label{CharaterizationResonantState}
    Let $\psi\in\Dfrak_\loc$. Then $\psi$ is a resonant state associated with $\pfrak$ if and only if $\psi\not=0$, $\Dvec\psi=\lambda\psi$ and there exists $\phi\in\leb^2_\comp(\R^3)^4$ such that $\psi=\Rpaz_0(\lambda,\kappa)\phi$ in a neighbourhood of $\infty$.
\end{corollary}

\begin{remark}
    \begin{enumerate}
        \item The elements of proof of Corollary \ref{CharaterizationResonantState} are again very similar to those of Corollary \ref{ResonantIOIOutgoing}. To be precise, the analoguous from \eqref{ResonantToOutgoingResolventPotential} is \eqref{ResonantToOutgoingResolventObstacle}. 
        \item Corollary \ref{CharaterizationResonantState} provides a reciprocal to Lemma \ref{ResonantStateExistence}.
    \end{enumerate}
\end{remark}

\appendix

\section{Regularity of the MIT bag model}\label{RegularityMIT}
In this appendix, we wish to prove \eqref{EquivalentNormMIT}. The lower bound is trivial. The upper bound will follow from the explicit computation of the quadratic form of the square of $H$ (see \cite[Theorem 1.5]{ArLeTRa17}). This is very related to the Schrödinger-Lichnerowicz formula (see \cite{HiMoRo02}). For the sake of precision and to remove any ambiguity, we redo the proofs. In particular, we insist on a density argument and distinguish the derivatives on $\Omega$ and $\partial\Omega$. 

\bigskip

For the reading, we set $B:=-\imath\beta\alpha\cdot\nvec$ in such a way that
\[\Dfrak:=\lcb\psi\in\sob^1(\Omega)^4\tq B\psi_{\tq\partial\Omega}=\psi_{\tq\partial\Omega}\rcb.\]

\subsection{A density result} 
The following Lemma will be used in the next section.

\begin{lemma}\label{density_result}
    The set $\Dfrak\cap\Ccal^1_c(\overline{\Omega})^4$ is dense in $\Dfrak$ for $\lV\cdot\rV_{\sob^1(\Omega)^4}$.
\end{lemma}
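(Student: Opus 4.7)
The plan is a two-stage approximation: first cut off at infinity to reduce to $\psi$ compactly supported in $\overline{\Omega}$, then localize near $\Sigma$ via a partition of unity and handle the interior and boundary pieces separately.

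For the first stage, I would pick $\chi_R\in\fsm_c(\R^3)$ equal to $1$ on $\ball_R$ and vanishing outside $\ball_{R+1}$. Since $\R^3\setminus\Omega$ is bounded, for $R$ large enough $\chi_R\equiv 1$ on a neighbourhood of $\Sigma$, and the pointwise computation
\[B(\chi_R\psi)_{\tq\Sigma}=\chi_RB\psi_{\tq\Sigma}=\chi_R\psi_{\tq\Sigma}=(\chi_R\psi)_{\tq\Sigma}\]
shows that $\chi_R\psi\in\dom(H_\Omega)$. Taking $\chi_R(\xvec)=\chi(\xvec/R)$ for a fixed cutoff $\chi$ yields $\chi_R\psi\to\psi$ in $\sob^1(\Omega)^4$, so I may assume $\psi$ is compactly supported in $\overline{\Omega}$. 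Next, a $\fsm$ partition of unity $\chi_{\mathrm{int}}+\chi_{\mathrm{bd}}=1$ on $\Supp\psi$ subordinate to a cover by an open set relatively compact in $\Omega$ and a thin tubular neighbourhood $V$ of $\Sigma$ splits $\psi=\chi_{\mathrm{int}}\psi+\chi_{\mathrm{bd}}\psi$, with each summand still in $\dom(H_\Omega)$ by the same multiplicative argument.

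The interior piece $\chi_{\mathrm{int}}\psi$ is compactly supported in the open set $\Omega$; extending by zero to $\R^3$ and mollifying gives a sequence in $\fsm_c(\Omega)^4\subset\dom(H_\Omega)\cap\fc^1_c(\overline{\Omega})^4$ converging to $\chi_{\mathrm{int}}\psi$ in $\sob^1(\Omega)^4$.

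The boundary piece is where the real work lies, and this is the main obstacle. Since $\Sigma$ is $\fc^2$, the outward normal $\nvec$ extends to a $\fc^1$ vector field $\tilde{\nvec}$ on $V$, for example as $\nabla d$ with $d$ the signed distance to $\Sigma$, in which case $\lv\tilde{\nvec}\rv\equiv 1$ near $\Sigma$. I set $\tilde{B}:=-\imath\beta\alpha\cdot\tilde{\nvec}$ and $\tilde{P}_\pm:=\tfrac{1}{2}(I_4\pm\tilde{B})$, which are, after shrinking $V$, $\fc^1$ fields of orthogonal projectors (using $\tilde{B}^2=\lv\tilde{\nvec}\rv^2I_4=I_4$) acting boundedly on $\sob^1(V\cap\Omega)^4$. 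I then decompose
\[\chi_{\mathrm{bd}}\psi=\tilde{P}_+(\chi_{\mathrm{bd}}\psi)+\tilde{P}_-(\chi_{\mathrm{bd}}\psi).\]
The MIT condition forces the trace of $\tilde{P}_-(\chi_{\mathrm{bd}}\psi)$ on $\Sigma$ to vanish, so this summand lies in $\sob^1_0(V\cap\Omega)^4$ and is approximated in $\sob^1$ by $\fsm_c(V\cap\Omega)^4$ functions, which trivially satisfy the MIT condition. For the remaining summand I would approximate $\chi_{\mathrm{bd}}\psi$ in $\sob^1$ by smooth functions $\phi_n$ supported in a fixed small neighbourhood of $\Sigma$ in $\overline{\Omega}$ and set $\psi_n:=\tilde{P}_+\phi_n$; by boundedness of $\tilde{P}_+$ one has $\psi_n\to\tilde{P}_+(\chi_{\mathrm{bd}}\psi)$ in $\sob^1$, each $\psi_n$ is $\fc^1$ with compact support in $\overline{\Omega}$, and since $\tilde{P}_\pm=P_\pm:=\tfrac{1}{2}(I_4\pm B)$ on $\Sigma$ with $P_-P_+=0$ the trace of $\psi_n$ satisfies $(I_4-B)\psi_{n\tq\Sigma}=0$, i.e. the MIT condition. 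Summing the three approximating sequences yields the desired sequence in $\dom(H_\Omega)\cap\fc^1_c(\overline{\Omega})^4$ converging to $\psi$ in $\sob^1(\Omega)^4$. The delicate point is precisely this boundary construction: simultaneously obtaining $\fc^1$-regularity, compact support in $\overline{\Omega}$, and the MIT condition. The projector trick achieves all three at once, at the cost of needing $\fc^1$-boundedness of $\tilde{P}_+$ on $\sob^1$, which in turn uses the $\fc^2$-smoothness of $\Sigma$ from Assumption \ref{obstacle}.
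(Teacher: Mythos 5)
Your argument is correct and is essentially the paper's own proof: your projector field $\tilde P_\pm=\tfrac12(I_4\pm\tilde B)$ coincides, up to the sign convention $\imath\beta\alpha\cdot\nvec=-B$ and a cutoff, with the paper's $Q^\mp$; your ``interior part plus $\tilde P_-$ part'' is exactly the paper's $Q^+\psi+(1-\chi)\psi\in\sob^1_0(\Omega)^4$ approximated by interior smooth functions, and your $\tilde P_+\phi_n$ trick is the paper's $Q^-h_k$. The preliminary cutoff at infinity and the partition of unity are harmless but not needed, since $\fsm_c(\Omega)$ is dense in $\sob^1_0(\Omega)$ and restrictions of $\fsm_c(\R^3)$ are dense in $\sob^1(\Omega)$ even though $\Omega$ is unbounded.
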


\begin{proof}
    Consider $\Nvec\in\Ccal^1_c(\R^3)^3$ such that $\Nvec_{|\partial\Omega}=\nvec$. Take $\Upaz$ an open subset of $\R^3$ containing $\partial\Omega$ and such that $\Nvec(\xvec)\not=\nulvec$ for $\xvec\in \Upaz$. Pick a function $\chi\in\Ccal^\infty_c(\R^3)$ such that $\Supp\chi\subset\Upaz$ and $\indic_{\partial\Omega}\prec\chi$. We define the matrix-valued functions $Q^\pm$ by
    \[Q^\pm(\xvec):=\left\{
        \begin{array}{ll}
            \frac{\chi(\xvec)}{2}\lp I_4\pm\imath\beta\alpha\cdot \frac{\Nvec(\xvec)}{\lv \Nvec(\xvec)\rv}\rp \mbox{ if } \xvec\in\Upaz \\
            0 \mbox{ otherwise.} 
        \end{array}
    \right.\]
    Clearly $Q^\pm$ are continuously differentiable and compactly supported. Let $\psi\in\Dfrak$. Then
    \begin{align*}
        \lp Q^+_{|\Omega}\psi+(1-\chi_{|\Omega})\psi\rp_{\tq\partial\Omega}&=\frac{I_4-B}{2}\psi_{\tq\partial\Omega}\\
        &=0.
    \end{align*}
    Hence $Q^+_{|\Omega}\psi+(1-\chi_{|\Omega})\psi\in\sob^1_0(\Omega)^4$ and we can pick a sequence $(g_k)_{k\geq 0}$ of functions of $\Ccal^\infty_c(\R^3)^4$ all supported in $\Omega$ such that 
    \[\lim_{k\rightarrow+\infty}g_{k|\Omega}=Q^+_{|\Omega}\psi+(1-\chi_{|\Omega})\psi \mbox{ in }\sob^1(\Omega)^4.\]
    Consider also a sequence $(h_k)_{k\geq 0}$ of $\Ccal^\infty_c(\R^3)^4$ such that 
    \[\lim_{k\rightarrow+\infty}h_{k|\Omega}=\psi \mbox{ in }\sob^1(\Omega)^4.\]
    Now set 
    \[f_k:=g_k+Q^-_{\tq\Omega}h_k\in\Ccal^1_c(\R^3)^4.\]
    Since $B^2=I_4$ we get
    \begin{align*}
        B f_{k\tq\partial\Omega}&=B\frac{I_4+B}{2}h_{k\tq\partial\Omega}\\
        &=\frac{B+I_4}{2}h_{k\tq\partial\Omega}\\
        &=f_{k\tq\partial\Omega}
    \end{align*}
    and then $f_k\in\Dfrak$. Since $Q^++Q^-=\chi I_4$ then one has in $\sob^1(\Omega)^4$
    \begin{align*}
        \lim_{k\rightarrow+\infty}f_{k|\Omega}&=Q^+_{|\Omega}\psi+(1-\chi_{|\Omega})\psi+Q^-_{|\Omega}\psi\\
        &=\psi.
    \end{align*}    
\end{proof}

\subsection{Schrödinger-Lichnerowicz formula}
In this section, we compute the quadratic form of the square of $H$. Let us quickly recall some points about the Weingarten map $W(\svec)$, $\svec\in\partial\Omega$. One has to know that $W(\svec)$ is a symmetric endomorphism of the tangent space $T_\svec\partial\Omega$ (endowed with the scalar product of $\R^3$) such that 
\[W(\svec)\vvec=(\nvec\circ c)'(0)\]
for any curve $c:I\rightarrow\partial\Omega$ ($I$ interval containing $0$) such that $c(0)=\svec$ and $c'(0)=\vvec$. It is then known that the mean curvature
\[h:\svec\in\partial\Omega\mapsto\frac{\Tr(W(\svec))}{2}\in\R\]
is essentially bounded. 

\begin{theorem}[Schrödinger-Lichnerowicz]\label{schrödinger_lichnerowicz}
    For all $\psi\in\Dfrak$
    \begin{equation}\label{square_form_dirac}
        \lV H\psi\rV_{\leb^2(\Omega)^4}^2=\lV\psi\rV_{\sob^1(\Omega)^4}^2+\la \psi_{\tq\partial\Omega},h\psi_{\tq\partial\Omega}\ra_{\leb^2(\partial\Omega)^4}.
    \end{equation}
\end{theorem}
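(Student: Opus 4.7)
The plan is to reduce \eqref{square_form_dirac}, via Lemma \ref{density_result}, to the regular class \(\dom(H_\Omega)\cap\fc^1_c(\overline\Omega)^4\) and then to split the proof there into a \emph{preliminary bulk identity}, valid for any sufficiently regular \(\psi\) without using the MIT condition, and a subsequent \emph{pointwise simplification} of the boundary integrand which does use it. The reduction is legitimate because both sides of \eqref{square_form_dirac} are continuous sesquilinear forms on \(\sob^1(\Omega)^4\): the volume terms are obviously \(\sob^1\)-continuous, and the boundary term is continuous thanks to the boundedness of \(\pazocal{T}_\Omega\) together with \(h\in\leb^\infty(\Sigma)\).

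For the first step I would establish, for \(\psi\in\sob^2(\Omega)^4\), the identity
\[
\int_\Omega \lv(\alpha\cdot D)\psi\rv^2\,d\xvec = \int_\Omega \lv D\psi\rv^2\,d\xvec + \int_\Sigma \psi_{\tq\Sigma}^*(\alpha\cdot\nvec)(\alpha\cdot\nabla_\Sigma)\psi_{\tq\Sigma}\,d\sigma,
\]
where \(\nabla_\Sigma\) denotes the tangential gradient on \(\Sigma\). It follows from the Green-type identity \(\int_\Omega\psi^*(\alpha\cdot D)\phi - \int_\Omega((\alpha\cdot D)\psi)^*\phi = -\imath\int_\Sigma\psi_{\tq\Sigma}^*(\alpha\cdot\nvec)\phi_{\tq\Sigma}\,d\sigma\) applied with \(\phi = (\alpha\cdot D)\psi\), from the algebraic identity \((\alpha\cdot D)^2 = -\Delta I_4\) coming from the Clifford relations \(\alpha_j\alpha_k+\alpha_k\alpha_j = 2\delta_{jk}I_4\), and from the usual Green formula for the Laplacian; the decomposition \(\nabla = \nvec\,\partial_\nvec + \nabla_\Sigma\) at the boundary, combined with \((\alpha\cdot\nvec)^2 = I_4\), makes the \(\partial_\nvec\) boundary contributions cancel in pairs. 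To reach the \(\fc^1_c(\overline\Omega)^4\) setting I would mollify a lift \(\tilde\psi\in\fc^1_c(\R^3)^4\) of \(\psi\) into \(\tilde\psi_\varepsilon\in\fsm_c(\R^3)^4\); the restrictions to \(\Omega\) lie in \(\sob^2(\Omega)^4\) and converge to \(\psi\) uniformly in \(\fc^1(\overline\Omega)^4\), which is enough to pass to the limit in every term since only first derivatives appear in the identity.

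For the second step, the condition \(B\psi_{\tq\Sigma} = \psi_{\tq\Sigma}\) with \(B = -\imath\beta(\alpha\cdot\nvec)\) is equivalent, upon multiplication by \(\imath\beta\) on the left, to \((\alpha\cdot\nvec)\psi = \imath\beta\psi\) on \(\Sigma\); taking the adjoint gives \(\psi^*(\alpha\cdot\nvec) = -\imath\psi^*\beta\). Applying \((\nabla_\Sigma)_j\) to the former, multiplying by \(\alpha_j\) on the left, summing in \(j\), and using the anticommutations \(\alpha_j(\alpha\cdot\nvec) = 2n_j I_4 - (\alpha\cdot\nvec)\alpha_j\) and \(\alpha_j\beta = -\beta\alpha_j\) together with \(\nvec\cdot\nabla_\Sigma = 0\) yields the pointwise identity on \(\Sigma\)
\[
(\alpha\cdot\nvec)(\alpha\cdot\nabla_\Sigma)\psi = \imath\beta(\alpha\cdot\nabla_\Sigma)\psi + \Big(\sum_{j,k}\alpha_j\alpha_k\,(\nabla_\Sigma)_j n_k\Big)\psi.
\]
Choosing for \(\nvec\) its natural extension \(\nvec = \nabla d\) (with \(d\) the signed distance to \(\Sigma\)), the coefficients \((\nabla_\Sigma)_j n_k = \partial_j\partial_k d\) are symmetric in \((j,k)\); the off-diagonal contributions in the matrix sum therefore vanish by antisymmetry of \(\alpha_j\alpha_k\) for \(j\ne k\), and the diagonal yields \(\sum_j (\nabla_\Sigma)_j n_j = 2h\) (the trace of the Weingarten map). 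Multiplying on the left by \(\psi^*\) and comparing with \(\psi^*(\alpha\cdot\nvec)(\alpha\cdot\nabla_\Sigma)\psi = -\imath\psi^*\beta(\alpha\cdot\nabla_\Sigma)\psi\) eliminates the coupling \(\psi^*\beta(\alpha\cdot\nabla_\Sigma)\psi\) between the two expressions and produces, pointwise on \(\Sigma\),
\[
\psi^*(\alpha\cdot\nvec)(\alpha\cdot\nabla_\Sigma)\psi = h\,\lv\psi\rv^2.
\]
Inserting this identity into the preliminary bulk formula gives \eqref{square_form_dirac} on \(\dom(H_\Omega)\cap\fc^1_c(\overline\Omega)^4\), and the initial reduction extends it to all of \(\dom(H_\Omega)\).

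The main obstacle will be precisely the second step: the coupling \(\psi^*\beta(\alpha\cdot\nabla_\Sigma)\psi\) carries a first-order tangential derivative of \(\psi\) on \(\Sigma\) which is not continuous in the \(\sob^1\) topology, so the reduction to the zero-order curvature quantity \(h\lv\psi\rv^2\) must be achieved algebraically, pointwise on \(\Sigma\) and before integration. The cancellation relies on the simultaneous use of the Clifford-type relations for the \(\alpha_j\) and of the geometric symmetry \(\partial_j n_k = \partial_k n_j\), i.e.\ on the fact that \(\nvec\) is locally a gradient; without either of these ingredients the off-diagonal terms of the matrix sum would survive and contribute a first-order tangential-derivative quantity that cannot be rewritten as a curvature integral.
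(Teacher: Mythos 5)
Your proposal is correct and follows the same three-stage skeleton as the paper (reduction to $\dom(H_\Omega)\cap\fc^1_c(\overline{\Omega})^4$ via Lemma \ref{density_result}, a bulk integration-by-parts identity whose boundary term involves only tangential derivatives, then a pointwise algebraic reduction of that boundary integrand to $h\lv\psi\rv^2$ using the MIT condition), but both sub-steps are executed differently. For the bulk identity the paper expands $\lv(\alpha\cdot D)f\rv^2$ directly into $\lv Df\rv^2$ plus divergence-form terms and applies Stokes once, staying at first order throughout; you instead pass through $(\alpha\cdot D)^2=-\Delta$ and two Green identities, which forces the detour through $\sob^2$ and the mollification of a lift --- slightly heavier, but it makes the cancellation of the normal-derivative boundary contributions via $(\alpha\cdot\nvec)^2=I_4$ transparent, and one checks that your $(\alpha\cdot\nvec)(\alpha\cdot\nabla_\Sigma)$ coincides with the paper's operator $P$. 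For the boundary reduction the paper diagonalizes the Weingarten map, works in principal directions $\tvec,\tvec'$ and differentiates the boundary condition along curves, using the $\gamma_5$ algebra; you stay in Cartesian coordinates, extend $\nvec$ as $\nabla d$, and let the symmetry of $\mathrm{Hess}\,d$ kill the off-diagonal Clifford terms while the diagonal produces $\Delta d=\Tr W=2h$. Your route is arguably cleaner in that it avoids choosing principal frames, at the cost of two points you should make explicit: that $\fc^2$-regularity of $\Sigma$ gives $d\in\fc^2$ near $\Sigma$ with $\lv\nabla d\rv=1$ (so that $\partial_\nvec\nvec=0$ and $(\nabla_\Sigma)_jn_k=\partial_j\partial_kd$ is indeed symmetric), and the sign convention for $d$ that makes $\nabla d$ the outward normal of $\Omega$ and $\Delta d_{\tq\Sigma}=2h$ with the paper's orientation. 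The final elimination of the coupling $\psi^*\beta(\alpha\cdot\nabla_\Sigma)\psi$ by averaging the differentiated condition with its adjoint is exactly the mechanism behind the paper's identity $Pf=2hf-BPf$, so the two computations agree where it matters.
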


\begin{proof} 
    Let $f\in\Ccal^\infty_c(\R^3)^4$. Using that $\beta$ and the $\alpha_j$'s anti-commute and are unitary and hermitian, one gets by developping
    \begin{align*}
        \lv(\beta+\alpha\cdot D)f\rv^2&=\lv f\rv^2+\lv D_1f\rv^2+\lv D_2f\rv^2+\lv D_3f\rv^2+\partial_1(f^*\alpha_1\alpha_2\partial_2f+f^*\alpha_1\alpha_3\partial_3f)\\
        &+\partial_2(f^*\alpha_2\alpha_3\partial_3f-f^*\alpha_1\alpha_2\partial_1f)-\partial_3(f^*\alpha_1\alpha_3\partial_1f+f^*\alpha_2\alpha_3\partial_2f)
    \end{align*}
    so by Stokes Theorem
    \begin{equation}\label{ipp}
        \int_\Omega\lv(\beta+\alpha\cdot D)f(\xvec)\rv^2d\xvec=\int_\Omega\lv f(\xvec)\rv^2d\xvec+\int_\Omega\lv Df(\xvec)\rv^2d\xvec+\int_{\partial\Omega}f(\svec)^*Pf(\svec)d\sigma(\svec)
    \end{equation}
    where 
    \begin{equation}\label{defP}
        Pf:=\alpha_1\alpha_2(n_1\partial_2f-n_2\partial_1f)+\alpha_1\alpha_3(n_1\partial_3f-n_3\partial_1f)+\alpha_2\alpha_3(n_2\partial_3f-n_3\partial_2f)
    \end{equation}
    
    Now take $f\in\Ccal^1_c(\R^3)^4$ and a sequence $(\rho_n)_{n\geq 0}$ of $\Ccal$ which is an approximation of identity. Consider the sequence $(f_n:=\rho_n*f)_{n\geq 0}$ of $\Ccal^\infty_c(\R^3)^4$. Then $(f_n)_{n\geq 0}$ and its first derivatives converge to $f$ and its first derivatives uniformly and in $\Hfrak$. In particular, as $\partial\Omega$ is compact, \eqref{ipp} remains true with $f\in\Ccal^1_c(\R^3)^4$.
    
    Until the end of the proof, we consider $f\in\Dfrak\cap\Ccal^1_c(\overline{\Omega})$. We must compute the surface integral from \eqref{ipp} using the boundary condition. In fact, we will be able to make the simplifications pointwise. Hence, we fix $\svec\in\partial\Omega$ and we drop the dependance of the functions in $\svec$ to allege the reading. In \eqref{defP}, one can recognize the structure of an exterior product by the normale : we are only considering tangential derivatives. Since $W=W(\svec)$ is symmetric we can consider $(\tvec,\tvec')$ an orthonormal basis of $T_\svec\partial\Omega$ such that $W\tvec=\lambda\tvec$ and $W\tvec'=\lambda'\tvec'$ ($\lambda,\lambda'\in\R$ are the principal curvatures of $\partial\Omega$) and such that $(\tvec,\tvec',\nvec)$ is a direct orthonormal basis of $\R^3$. For $\vvec\in\R^3$ we define $\partial_\vvec f\in\C^4$ by
    \[(\partial_{\vvec}f)_k:=\vvec\cdot\nabla f_k \mbox{ for } k=1,2,3,4.\]
    Since $(\nvec,\tvec,\tvec')$ is orthonormal, one has 
    \[\partial_jf=n_j\partial_\nvec f+t_j\partial_\tvec f+t'_j\partial_{\tvec'}f \mbox{ for } j=1,2,3.\]
    so \eqref{defP} becomes
    \begin{align*}
        Pf&=\lb\alpha_1\alpha_2(\nvec\times\tvec)_3-\alpha_1\alpha_3(\nvec\times\tvec)_2+\alpha_2\alpha_3(\nvec\times\tvec)_1\rb\partial_{\tvec}f\\
        &+\lb\alpha_1\alpha_2(\nvec\times\tvec')_3-\alpha_1\alpha_3(\nvec\times\tvec')_2+\alpha_2\alpha_3(\nvec\times\tvec')_1\rb\partial_{\tvec'}f
    \end{align*}
    which, with the choice of the orientation, simplifies as 
    \begin{equation}\label{Psurface}
        Pf=\imath\gamma_5\lb(\alpha\cdot\tvec')\partial_{\tvec}f-(\alpha\cdot\tvec)\partial_{\tvec'}f\rb
    \end{equation}
    where $\gamma_5:=-\imath\alpha_1\alpha_2\alpha_3$. Pick a curve $c:I\rightarrow\partial\Omega$ such that $c(0)=\svec$ and $c'(0)=\tvec$. Applying the Leibniz formula to the boundary condition seen along the curve $c$ and using that $\partial_\tvec f=(f\circ c)'(0)$ and $(\nvec\circ c)'(0)=W\tvec=\lambda\tvec$ one gets
    \[\partial_\tvec f=-\imath\lambda\beta(\alpha\cdot\tvec)f+B\partial_\tvec f.\]
    Since $(\alpha\cdot X)(\alpha\cdot Y)=X\cdot Y+\imath\gamma_5\alpha\cdot(X\times Y)$ and $\beta\gamma_5=-\gamma_5\beta$ one eventually gets by using again the boundary condition
    \begin{equation}\label{Psurface1}
        (\alpha\cdot\tvec')\partial_\tvec f=-\imath\lambda\gamma_5 f+B(\alpha\cdot\tvec')\partial_\tvec f.
    \end{equation}
    Similary
    \begin{equation}\label{Psurface2}
        (\alpha\cdot\tvec)\partial_{\tvec'} f=\imath\lambda'\gamma_5 f+B(\alpha\cdot\tvec)\partial_{\tvec'} f.
    \end{equation}
    Using $\gamma_5^2=-I_4$, $\gamma_5(\alpha\cdot X)=(\alpha\cdot X)\gamma_5$, $\lambda+\lambda'=2h$ and combining \eqref{Psurface},\eqref{Psurface1} and \eqref{Psurface2} we end up with
    \[Pf=2hf-BPf.\]
    Using a last time the boundary condition, we get 
    \[f(\svec)^*Pf(\svec)=h(\svec)\lv f(\svec)\rv^2.\]
    Hence, Theorem \ref{schrödinger_lichnerowicz} is proven for smooth enough functions. Lemma \ref{density_result} finishes the job.
\end{proof}

\subsection{Conclusion}

\begin{corollary}
    For all $\psi\in\Dfrak$
    \[\lV\psi\rV_{\sob^1(\Omega)^4}\leq C\lV H\psi\rV_{\leb^2(\Omega)^4}\]
    where $C>0$ does not depend on $\psi$.    
\end{corollary}

\begin{proof}
    Pick $\varepsilon>0$. There exists $C_0>0$ such that for all $\psi\in\sob^1(\Omega)^4$
    \[\lV \psi_{\tq\partial\Omega}\rV_{\leb^2(\partial\Omega)^4}^2\leq\varepsilon\lV \psi\rV_{\sob^1(\Omega)^4}^2+C_0\lV \psi\rV_{\leb^2(\Omega)^4}^2.\]
    Take $\psi\in\Dfrak$. Thanks to Theorem \ref{schrödinger_lichnerowicz} and the fact that $h\in\leb^\infty(\partial\Omega)$ one obtains
    \begin{equation}\label{Conclusion1}
        \lV H\psi\rV_{\leb^2(\Omega)^4}^2\geq\lp1-\varepsilon\lV h\rV_{\leb^\infty(\partial\Omega)}\rp\lV \psi\rV_{\sob^1(\Omega)}^2-C_0\lV h\rV_{\leb^\infty(\partial\Omega)}\lV\psi\rV_{\leb^2(\Omega)^4}^2.
    \end{equation}
    Since
    \[\lV H\psi\rV_{\leb^2(\Omega)^4}^2=\lV(\alpha\cdot D)\psi\rV_{\leb^2(\Omega)^4}^2+\lV\psi\rV_{\leb^2(\Omega)^4}^2\]
    then
    \begin{equation}\label{Conclusion2}
        \lV H\psi\rV_{\leb^2(\Omega)^4}^2+C_0\lV h\rV_{\leb^\infty(\partial\Omega)}\lV\psi\rV_{\leb^2(\Omega)^4}^2\leq (1+C_0\lV h\rV_{\leb^\infty(\partial\Omega)})\lV H\psi\rV_{\leb^2(\Omega)^4}^2.
    \end{equation}
    Taking $\varepsilon>0$ small enough in \eqref{Conclusion1} and using \eqref{Conclusion2}, we get the result expected.
\end{proof}

\printbibliography

\end{document}